\begin{document}

\newremark{remark}{Remark}

\parindent0mm
\newtheorem{theorem}{Theorem}
\newtheorem{lemma}{Lemma}
\newtheorem{ex}{Example}
\newtheorem{cor}{Corollary}
\newtheorem{defn}{Definition}
\newcommand{\ignore}[1]{}

\title{\bf On the {\em {SPR}}ification of linear descriptor systems via output feedback}

\author{Martin Corless$^{1}$\thanks{$^{1}$School of Aeronautics \& Astronautics, Purdue University, West Lafayette, IN, USA}
Ezra Zeheb$^{2}$\thanks{$^{2}$Technion - Israel Institute of Technology and Holon Institute of Technology, Israel.}
Robert Shorten$^{3}$\thanks{$^{3}$ Department of Electrical and Electronic Engineering, University College Dublin, Belfield, Dublin 4, Ireland.}}%

\maketitle

\begin{abstract}

We consider input-output systems in descriptor form and ask when such systems can be rendered 
SPR (strictly positive real)
 via output feedback. Time and frequency domain conditions are given to determine
when and how this is possible. In addition, a synthesis procedure for controller design is also
derived. Together, the results provide a
complete answer to when a linear descriptor system can be made SPR via output feedback, and 
give a recipe for design of the feedback  controller when it exists. Simple examples are given to illustrate 
our results and to demonstrate their efficacy. 

\end{abstract}

\section{History and related work}
A passive linear time-invariant system is a system whose transfer function is
positive real. The notion of a positive real (PR) function and of a strict
positive real (SPR) function is important  in many areas of engineering systems, in
particular in control theory and in circuit theory. A partial list of such
areas would be design of passive filters, absolute stability theory, output
feedback stabilization of uncertain systems, adaptive control systems,
switching systems, and variable structure systems. An important historical property 
that has made passivity and positive realness particularly attractive to classical control engineers is that a physical system, which is passive,
has properties that makes the behavior of the system ``friendly''. For example, it is well known
that a negative feedback connection of two (strictly) passive systems is always
asymptotically stable, and that the stability of more complicated interconnections of 
passive systems is characterised by simple algebraic conditions \cite{berman, hill}.
This has made passivity very useful in the design of distributed control systems. 
More recently, passivity has assumed an important role in the study of optimisation 
algorithms and consensus \cite{arcak}, the study of cyber physical systems \cite{panos}, 
and in the exploration of diagonal stability problems \cite{berman}.\newline 

Despite the contemporary interest in passivity, it is worth noting that the concept of passivity, and its connection to positive realness, is an old one. 
The concept of positive realness was introduced by Brune \cite{Brune1931} about 85 years ago. Brune proved that the driving point impedance of every
passive electrical network is a positive real function and that every positive real function can be synthesized by a passive electrical network.
Bott and Duffin  \cite{BottDuffin1949} demonstrated that Brune's result also holds for  a passive network without transformers, that is,
networks consisting of inductors, capacitors and resistors only. An important modification of the original concept of positive realness is 
the notion of {\em strict positive realness}. Strict positive realness (SPR) was introduced in the control community via the KYP Lemma \cite{kyp1,kyp2,kyp3,kyp4}. 
The KYP (Kalman, Yacubovich, Popov) Lemma is a fundamental result  in system and control theory as it establishes a connection between frequency domain criteria and 
state space criteria.
%linear matrix inequalities. 
This relationship has been central to the development of several areas of control - in particular in the study of  absolute stability theory \cite{kyp4, shorten},
adaptive control, switching systems and more.\newline

Given this general background, an important problem in control is to establish when it is possible to convert 
a system that is not SPR into one that is SPR via output feedback. This question has attracted the attention 
of many researchers over the past thirty years and their progress has been documented in a series 
of papers. 
While these are too numerous to mention explicitly, we mention a few here.
All of these results are for systems whose transfer function is  strictly proper.
Necessary and sufficient conditions 
for a  transfer function to be made SPR via  static output feedback are given in \cite{fradkov}.
For strictly proper transfer functions, it is shown in \cite{Huang} that,
if no static output feedback controller exists such that the transfer function of the closed loop system is SPR, 
then there does not exist an output dynamic
feedback  controller such that the transfer function of the closed loop system is SPR, as
well;
%Thus, one only has to consider developing conditions such that there
%exists a \textit{static} output feedback controller rendering a closed loop SPR transfer function.
%Recent work  that has been done with regard to this latter problem includes
%For strictly proper transfer functions, 
\cite{Huang} also contains  necessary and sufficient conditions  for the existence of a
static output feedback controller rendering a closed loop SPR transfer function.
These conditions depend on the existence of a positive definite matrix
complying with a certain matrix inequality.
In \cite{barkana} and \cite{owens},
necessary and sufficient conditions for the existence of a static output
feedback controller rendering a closed loop SPR transfer function are given. These
are expressed in terms of the transfer function of the open loop system.
%An algorithm using linear matrix inequalities (LMI)
%is proposed in \cite{Huang} to find a constant tandem matrix and a constant output 
%feedback matrix which render a closed loop SPR transfer function.
%A different approach from the above, namely to use a feedforward action in
%order to obtain an SPR transfer function, is given in~\cite{owens}.
\newline

Our objective in this paper is to ask when a general linear input-output  descriptor system, whose transfer function is not necessarily proper,
can be made strictly positive real   via output feedback. Classically, such questions 
often give rise to several types of equivalent characterisations: (i) in the time domain; 
(ii) in the frequency domain; as well as necessitating the need for a controller synthesis 
procedure.  Our conditions also
give rise to three such (equivalent) characterisations. The first is a time domain spectral (eigenvalue) condition; the 
second an equivalent frequency domain condition; and the third is essentially a control design procedure. 
Together, these conditions provide a
complete answer to when a general  linear  descriptor system can be made SPR via output feedback, and 
give a recipe for design of the controller when it exists. Simple examples are given to illustrate 
our results and to demonstrate their efficacy. \newline

{\bf Specific contributions:} The topic of when a linear system can be made  SPR
via output feedback has a rich history.  Given the volume of work on this topic, a brief comment 
on the contributions of this note is merited.  Specifically, our work is novel in a number of 
ways. First, we give conditions for  general linear descriptor systems   rather than the usual state space systems;  descriptor systems can have improper transfer functions.
Second,
to the best of our knowledge, our spectral and synthesis procedure have not been derived 
elsewhere in the literature. Third, for the first time, a complete set of equivalent system theoretic 
characterisations of SPRification is given (time domain, frequency domain) in one place.

\section{Problem statement and main results}

Consider an input-output system  in descriptor form described by
\begin{equation}
\label{eq:ss1}
\begin{array}{rcl}
E\dot{x} &=& Ax +B(u+ w)\\
y&=& Cx+D(u+w)
\end{array}
\end{equation}
where the {\sf state} $x(t)$ is an $n$-vector\footnote{An $n$-vector is a real or complex vector with $n$ components, that is, an element of
$\mathbb{R}^n$ or $\mathbb{C}^n$, respectively}, the {\sf  control input} $u(t)$, {\sf output} $y(t)$, and {\sf exogeneous input} $w(t)$ are $m$-vectors
while  $E$ and $A$ are $n\times n$ matrices and  $B, C,$ and $D$ are matrices of dimensions $n\times m, m\times n$ and
$m \times m$, respectively.
Sometimes we refer to this system as the system $(E, A, B, C, D)$.
We wish to know whether or not this system can be made stable and SPR (strictly positive real) through static output feedback, specifically, if there exist matrices $K$  and $L$ such that
the  system resulting  from
\begin{equation}
\label{eq:fbk}
u=Ky \qquad \mbox{and} \qquad z =Ly
\end{equation}
is stable and the resulting  transfer function from $w$ to $z$ is SPR.\newline

For the feedback law  in \eqref{eq:fbk} to be well-posed, one must require that $I-KD$  be non-singular.
In Calculation 1 in the Appendix, it is shown 
that, provided  $I-KD$ is non-singular,  the system resulting from control  (\ref{eq:fbk}) applied to system (\ref{eq:ss1}) is described by
\begin{equation}
\label{eq:descSS1}
\begin{array}{rcl}
E\dot{x} &=& A_cx +B_cw\\
z&=& C_cx +D_cw
\end{array}
\end{equation}
where
\begin{equation}
A_c =A +B(I-KD)^{-1}KC\,.
\end{equation}
%\footnote{A number $\lambda$ is an {\sf eigenvalue} of the matrix pair $(E, A_c)$ or system (\ref{eq:descSS1})  if  there is a non-zero vector $v$ such that
%$( \lambda E - A_c)v=0$.
%Any such vector $v$ is  called an {\sf eigenvector} corresponding to $\lambda$.
%$\det(\lambda E -A_c) =0$. 
%The descriptor system (\ref{eq:descSS1}) is {\sf stable} or the pair $(E,A_c)$ is stable if  every  eigenvalue of $(E,A_c)$ has negative real part.}
%
 A number $\lambda$ is an {\sf eigenvalue} of the matrix pair $(E, A_c)$ or system (\ref{eq:descSS1})  if  there is a non-zero vector $v$ such that
$( \lambda E - A_c)v=0$.
Any such vector $v$ is  called an {\sf eigenvector} corresponding to $\lambda$.
The descriptor system (\ref{eq:descSS1}) is {\sf stable} or the pair $(E,A_c)$ is stable if  every  eigenvalue of $(E,A_c)$ has negative real part.
Throughout, we  assume that $(E, A)$ is regular in the sense that $\det(sE-A)$ is not identically zero.\newline

%Sometimes, for convenience we use the matrix pair  $(E, A_c)$ to refer to the system (\ref{eq:descSS1})\protect
%\footnote{Recall, if  there is a non-zero vector $v$ such that
%$( \lambda E - A_c)v=0$.
%Any such vector $v$ is  called an eigenvector corresponding to $\lambda$. The descriptor system (\ref{eq:descSS1}) is {\sf stable} or the pair $(E,A_c)$ is stable if  every  eigenvalue of $(E,A_c)$ has negative real part.}.

The transfer function associated with  the original system (\ref{eq:ss1}) is given by
\begin{equation}
G(s) = C(sE-A)^{-1}B + D \,.
\end{equation}
In the Laplace domain,  the original system and  the output feedback (\ref{eq:fbk}) are described by
\begin{equation}
\label{eq:fbkL}
\hat{y}=G(\hat{u}+\hat{w})\,, \qquad \hat{u}=K\hat{y} \,, \qquad \hat{z}=L\hat{y}
\end{equation}
where $\hat{u}, \hat{y}, \hat{w}, \hat{z}$ denote the Laplace transforms of $u, y, w, z$.
Thus
$\hat{y}=GK\hat{y} +G\hat{w}$, which implies that
 $(I-GK)\hat{y}= G\hat{w}$.
Hence,
$
\hat{z}=  G_c \hat{w}
%L(I-GK)^{-1}G\hat{w}
%=(G^{-1}+K)^{-1}\hat{w}
$
%if we define new output $z$ by
%\begin{equation}
%z=Ly
%\end{equation}
%then
%\begin{equation}
%\hat{z} = H \hat{w}
%\end{equation}
where
\begin{equation}
\label{eq:G_c}
G_c =L(I-GK)^{-1}G= LG(I-KG)^{-1} 
\end{equation}

\vspace{1em}
We say that {\em $\lambda$ is a {\sf pole}  of  $G_c$  or  system (\ref{eq:descSS1}) if
$\lim_{s \rightarrow \lambda} G_c(s)$ does not exist.}\newline

{\em
{\bf Strict Positive Realness:} The transfer function $G_c$ is {\sf strictly positive real (SPR )} if there exists $\epsilon >0$ such that\footnote{ $M^\prime$ denotes the complex conjugate transpose of a  matrix $M$.} 
whenever $s\in \mathbb{C}$ is not a pole of $G_c$:
\begin{equation}
\label{eq:SPRdefn}
G_c(s) + G_c(s)^\prime >0 \quad \mbox{for} \quad \Re(s) \ge - \epsilon\,.
\end{equation}}

Now we can  state the problem under consideration in a more precise fashion.

\vspace{1em}
\begin{center}
\begin{boxedminipage}[t]{1.0\linewidth}
{\bf Problem statement:}
{\em Determine conditions under which there exist matrices $K$ and $L$ with $I-KD$ non-singular such that
\begin{eqnarray}
\label{eq:Objective1}
(E,\ A\!+\!B(I-KD)^{-1}KC)  & & \mbox{is stable}\\
\label{eq:Objective2}
L(I-GK)^{-1}G & & \mbox{is SPR}
\end{eqnarray}
}
\end{boxedminipage}
\end{center}

%\newpage

\subsection{Main Result (Part A): A spectral (time domain) characterisation of SPRification}
\label{sec:mainresA}
We can now present our first main result. To proceed, we shall need the following two matrices:
\begin{equation}
\label{eq:DescNew}
%\fbox{$
\mathcal{E} :=\left[\begin{array}{cc}
E  &0  \\
0   &0
\end{array}
\right]
\qquad \mbox{and} \qquad
\mathcal{A} :=\left[\begin{array}{cc}
A &B  \\
C   &D
\end{array}
\right]
%$}
\end{equation}
 %is  the {\sf  system matrix}
%associated with  system (\ref{eq:ss1})
%and
%\begin{equation}
%\label{eq:calEdefn}
%\mathcal{E} :=\left[\begin{array}{cc}
%E  &0  \\
%0   &0
%\end{array}
%\right]\,.
%\end{equation}
\vspace{1em}
%%%%%%%%%%%%%Rank condition
and the following rank conditions:
{\em
\begin{equation}
\label{eq:fullRankInf}
\left[\begin{array}{cc} E & B \end{array} \right]
\qquad\mbox{and} \qquad
\left[\begin{array}{c} E \\C \end{array} \right] \\ 
%\quad \mbox{have maximum rank.}
\end{equation}
have maximum rank.
}

Our first main result involves   a simple  eigenvalue  condition on the state space matrices describing the system.\footnote{Recall that the index of zero as an eigenvalue of   a  square matrix  $S$   is the smallest   integer $k\ge 0$  for which
the rank of $S^{k+1}$ equals the rank of ${S}^{k}$.}
\begin{center}
\begin{boxedminipage}[t]{1.0\linewidth}
\begin{theorem}
\label{th:main}
Consider a system described by (\ref{eq:ss1})
that satisfies  rank condition (\ref{eq:fullRankInf}). %
There exist matrices $K$ and $L$, with $I-KD$ non-singular, such that objectives (\ref{eq:Objective1}) and (\ref{eq:Objective2}) hold if and only if  the following conditions hold.
\begin{itemize}
\item[(a)]
$ \mathcal{A}$ is non-singular and the non-zero eigenvalues of $\mathcal{A}^{-1}\mathcal{E}$ have negative real part.
\item[(b)] 
%If  zero is an  eigenvalue of $\mathcal{A}^{-1}\mathcal{E}$, it is non-defective. 
%$\mathcal{A}^{-1}\mathcal{E}$ and  $(\mathcal{A}^{-1}\mathcal{E})^2$ have the same rank.
The index of zero as an eigenvalue\
of $\mathcal{A}^{-1}\mathcal{E}$ is at most two.
\end{itemize}
\end{theorem}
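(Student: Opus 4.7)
My plan is to recast the conditions on $\mathcal{A}^{-1}\mathcal{E}$ as properties of the matrix pencil $s\mathcal{E}-\mathcal{A}$, view the closed loop as an augmented descriptor system in the variable $\xi=[x^{\prime},\,y^{\prime}]^{\prime}$ whose pencil is a structured rank-$m$ perturbation of $s\mathcal{E}-\mathcal{A}$, and match stability and SPR of the closed-loop transfer function to the Kronecker structure of this perturbed pencil. As a preliminary translation I would note that a non-zero $\mu$ is an eigenvalue of $\mathcal{A}^{-1}\mathcal{E}$ iff $1/\mu$ is a finite eigenvalue of $(\mathcal{E},\mathcal{A})$; since $\mu\mapsto 1/\mu$ preserves the open left half-plane, condition (a) is equivalent to $\mathcal{A}$ non-singular with every finite pencil eigenvalue in the open left half-plane. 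Likewise the index of zero as an eigenvalue of $\mathcal{A}^{-1}\mathcal{E}$ equals the index at infinity of the pencil, so (b) says that index is at most two. Under rank condition (\ref{eq:fullRankInf}) the pencil $(\mathcal{E},\mathcal{A})$ is regular and one can read off both the finite spectrum and the structure at infinity from a Weierstrass--Kronecker decomposition.

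For necessity I would suppose $K$ and $L$ achieve (\ref{eq:Objective1})--(\ref{eq:Objective2}) and observe that the augmented closed loop is a descriptor system with pencil $s\mathcal{E}-\mathcal{A}(K)$ for some $\mathcal{A}(K)$ obtained from $\mathcal{A}$ by a specific rank-$m$ modification driven by $K$. The finite spectrum of $(\mathcal{E},\mathcal{A}(K))$ coincides with the spectrum of $(E,A_c)$, and its index at infinity equals one plus the polynomial degree of $G_c$. Well-posedness forces $\mathcal{A}$ to be non-singular, closed-loop stability forces the finite eigenvalues of $(\mathcal{E},\mathcal{A})$ into the open left half-plane, and SPR of $G_c$ forces at most linear growth at infinity (else $G_c(s)+G_c(s)^{\prime}$ would change sign along the imaginary axis), yielding index at most two. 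This yields (a) and (b).

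For sufficiency I would construct $K$ and $L$ in two stages. Condition (a) together with (\ref{eq:fullRankInf}) allows a choice of $K$ with $I-KD$ non-singular such that the closed-loop augmented pencil retains stable finite eigenvalues and index at most two; condition (b) then lets one decompose $G_c(s)=Ms+H(s)$ with $H$ proper and stable and $M$ read off from the residue at infinity of the Weierstrass--Kronecker form. Then $L$ is chosen so that the leading coefficient is symmetric and positive semidefinite while $H(s)+H(s)^{\prime}$ is uniformly positive definite in a strip $\Re(s)\ge -\epsilon$, which is exactly the SPR requirement (\ref{eq:SPRdefn}).

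The hardest step is the sufficiency in the boundary case when the index equals two, i.e.\ $G_c$ is genuinely improper of degree one. There one must extract the $s$-linear part of $G_c$ from the Kronecker form of $(\mathcal{E},\mathcal{A})$ and show that $L$ can simultaneously symmetrise and positivise this leading term while keeping the proper remainder strictly positive real, all while respecting the constraint that $K$ has already been fixed to place the finite spectrum. This coupling between the choice of $K$ and the choice of $L$, together with the non-properness of $G_c$, is precisely where rank condition (\ref{eq:fullRankInf}) will be crucial to guarantee that a consistent selection exists.
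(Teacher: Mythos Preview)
Your high-level translation of conditions (a) and (b) into pencil language is correct, but the proof plan has genuine gaps and misses the organising idea that the paper uses.

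\textbf{The missing key identity.} The paper does not work with a perturbed pencil $(\mathcal{E},\mathcal{A}(K))$ at all. Its central observation (Lemma~\ref{lem:main2}) is that
\[
G(s)^{-1}=\mathcal{C}(s\mathcal{E}-\mathcal{A})^{-1}\mathcal{B},
\qquad
\mathcal{B}=\begin{bmatrix}0\\I\end{bmatrix},\quad
\mathcal{C}=\begin{bmatrix}0&-I\end{bmatrix},
\]
so $G^{-1}$ is itself the transfer function of the descriptor realisation $(\mathcal{E},\mathcal{A},\mathcal{B},\mathcal{C},0)$. With this, the finite eigenvalues of $(\mathcal{E},\mathcal{A})$ are exactly the finite poles of $G^{-1}$ together with the uncontrollable and unobservable eigenvalues of the original system (Corollary~\ref{cor:cor2}), and, under rank condition~(\ref{eq:fullRankInf}), the index of $(\mathcal{E},\mathcal{A})$ equals one plus the order of the pole of $G^{-1}$ at infinity (Lemma~\ref{lem:InfintyPole}). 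Theorem~\ref{th:main} then follows directly from Theorem~\ref{th:main2}, whose conditions are purely about the poles of $G^{-1}$ and the uncontrollable/unobservable eigenvalues. Your outline never mentions $G^{-1}$, and without this identity you have no clean link between the pencil data and what SPR actually constrains.

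\textbf{Specific errors in necessity.} The claim ``well-posedness forces $\mathcal{A}$ to be non-singular'' is false: $I-KD$ invertible says nothing about $\det\mathcal{A}$. In the paper, non-singularity of $\mathcal{A}$ comes from the fact that $0$ cannot be a pole of $G^{-1}$ (SPR of $G_c$ implies the finite poles of $G^{-1}=G_c^{-1}L+K$ lie in the open left half-plane) and $0$ cannot be an uncontrollable/unobservable eigenvalue (closed-loop stability). Similarly, your statement that the index at infinity of the closed-loop pencil equals ``one plus the polynomial degree of $G_c$'' is not what is needed: SPR bounds the growth of $G_c^{-1}$ (Lemma~\ref{lem:spr}), hence of $G^{-1}$, and it is the pole order of $G^{-1}$ at infinity that must be tied to the index of the \emph{original} pencil $(\mathcal{E},\mathcal{A})$, not of any $K$-perturbed pencil.

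\textbf{Sufficiency is not a construction.} Saying ``condition (a) allows a choice of $K$'' is not a proof; you give no mechanism for producing $K$. The paper's route is constructive: once conditions (a)--(b) of Theorem~\ref{th:main} are translated via the identity above into the pole conditions on $G^{-1}$ in Theorem~\ref{th:main2}, Lemmas~\ref{lem:prelim0}--\ref{lem:prelim02} manufacture $L$ (so that $L'H_1\ge 0$ for $H_1=\lim_{s\to\infty}s^{-1}G(s)^{-1}$) and then $K=-L^{-\prime}M$ for a suitable $M$. Your proposed decomposition $G_c(s)=Ms+H(s)$ is circular, since $G_c$ already depends on the $K,L$ you are trying to build. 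Finally, the rank condition~(\ref{eq:fullRankInf}) is not used where you place it (the ``hardest step'' of choosing $L$); it is used in Lemma~\ref{lem:InfintyPole} to ensure that no cancellation occurs between $\mathcal{C},\mathcal{B}$ and the nilpotent part of $(s\mathcal{E}-\mathcal{A})^{-1}$, so that the index of $(\mathcal{E},\mathcal{A})$ really is read off from the pole of $G^{-1}$ at infinity.
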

\end{boxedminipage}
\end{center}

\vspace{1em}
\begin{ex}{[Simple integrator]}
{\rm
Consider the simple integrator described by $\dot{x} = u$ and $y=x$.
Here $E=1$, $A=0$, $B=C=1$ and $D=0$.
Hence,
\[
\mathcal{A} =\left[\begin{array}{cc}
0	&	1\\
1	&  0
\end{array}
\right],
\mathcal{E} =\left[\begin{array}{cc}
1	&	0\\
0	&  0
\end{array}
\right],
\mathcal{A} ^{-1}\mathcal{E} =
\left[\begin{array}{cc}
0	&	0  \\
1	&  0
\end{array}
\right]
\]
So, $\mathcal{A} ^{-1}\mathcal{E}$ only has an eigenvalue at zero and its index is two.
Hence by Theorem \ref{th:main},
objectives (\ref{eq:Objective1}) and (\ref{eq:Objective2}) 
can be achieved.
}
\end{ex}

%%%%%%%%%%%%%%%%	Example	Descriptor System	%%%%%%%%%%%%%%%%%%

\begin{ex}{[Descriptor system]}
{\rm
Consider the descriptor system with
{\normalsize
\[
E=\left[\begin{array}{rr}
0 &0  \\
1   &0
\end{array}
\right],
A=\left[\begin{array}{rr}
1 &0  \\
0   &1
\end{array}
\right]  \] 
\[
B=\left[\begin{array}{rr}
-1  &-1  \\
0   &0
\end{array}
\right],	
C=\left[\begin{array}{rr}
-1  &1 \\
0   &1
\end{array}
\right],
D=\left[\begin{array}{rr}
-3  &0  \\
-1   &0
\end{array}
\right]
\]
}
Since $\det(sE-A)\equiv 1$, this system has no finite eigenvalues.
Here
\[
\mathcal{A}=
\left[\begin{array}{rrrr}
1&0          &-1   &-1  \\
0       &1   &0   &0   \\
-1      &1     &-3   &0 \\
0       &1      &-1  &0
\end{array}
\right] 
\] \[
\mathcal{E}=
\left[\begin{array}{rrrr}
0  &0      &0  &0  \\
1       &0   &0   &0   \\
0    &0      &0  &0 \\
0       &0      &0   &0
\end{array}
\right]
,
\quad\mathcal{A}^{-1}\mathcal{E}=
\left[\begin{array}{rrrr}
    -2  &   0   &  0   &  0 \\
     1 &    0    & 0   &  0 \\
     1  &   0   &  0   &  0 \\
    -3  &   0    & 0    & 0
    \end{array}
\right]
\]
So, $\mathcal{A}^{-1}\mathcal{E}$ has  eigenvalues $0$ and $-2$ and the order of the zero eigenvalue is
one, it follows from Theorem \ref{th:main} that
objectives (\ref{eq:Objective1}) and (\ref{eq:Objective2})  can be achieved for this system.
}
\end{ex}

\subsection{Main Result (Part B): A frequency domain characterisation of SPRification}
\label{sec:mainresB}
Our next result, which involves conditions on the transfer function of the system, needs the following definitions
 for a finite complex number $\lambda$.

 %{\em
%$\lambda$ is an {\sf  eigenvalue} for $(E, A)$  or system (\ref{eq:sysDesc2})  if
%$\lambda E -A$ is singular.}

%\begin{defn}{[Uncontrollable eigenvalue.]}

\vspace{1em}
{\em $\lambda$ is an {\sf uncontrollable eigenvalue}  for  $(E, A, B)$  or system (\ref{eq:ss1}) if
\[
\left[\begin{array}{cc} \lambda E\!-\!A & B\end{array} \right]
\]
does not have maximum rank.
}
%\end{defn}

\vspace{1em}
%\begin{defn}{[Unobservable eigenvalue.]}
%We also say that 
{\em
$\lambda$ is an {\sf unobservable eigenvalue} for $(E, C, A)$ or  system (\ref{eq:ss1})  if
\[
\left[\begin{array}{c} \lambda E\!-\!A \\ C\end{array} \right]
\]
does not have maximum rank.
}

\vspace{1em}
{\em If infinity is a pole of   a transfer function $H$, its  {\sf order}  is  the smallest integer  $l$ for which  $\lim_{s\rightarrow  \infty} s^{-l}H(s)$ exists.}

\begin{center}
\begin{boxedminipage}[t]{1.0\linewidth}
\begin{theorem}
\label{th:main2}
Consider a system described by (\ref{eq:ss1}).
%which satisfies  rank condition (\ref{eq:fullRankInf}).
%
There exist matrices $K$ and $L$, with $I-KD$ non-singular, such that objectives (\ref{eq:Objective1}) and (\ref{eq:Objective2}) hold if and only if  the following conditions hold.
\begin{itemize}
\item[(a)] The finite poles of $G^{-1}$ have negative real part where $G$ is the system transfer function.
\item[(b)] If $G^{-1}$ has a pole at infinity then, its order  is one.
\item[(c)]
The uncontrollable and unobservable  eigenvalues of the system   have negative real part.
\end{itemize}
\end{theorem}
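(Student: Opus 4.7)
I will prove Theorem \ref{th:main2} by reducing it to Theorem \ref{th:main} through a sequence of pencil--to--transfer--function translations. The central algebraic tool is the Schur--complement identity
\[
\det(s\mathcal{E}-\mathcal{A}) \;=\; (-1)^{m}\det(sE-A)\det G(s),
\]
valid wherever $sE-A$ is non-singular, together with the fact that the Rosenbrock matrix $\bigl[\begin{smallmatrix} \lambda E - A & B \\ C & D \end{smallmatrix}\bigr]$ loses rank at exactly those $\lambda$ that are finite eigenvalues of the pair $(\mathcal{E},\mathcal{A})$. Combined with the observation that a non--zero eigenvalue $\lambda$ of $\mathcal{A}^{-1}\mathcal{E}$ is the reciprocal of a finite non--zero eigenvalue $\mu=1/\lambda$ of the pencil $(\mathcal{E},\mathcal{A})$, and that reciprocation preserves the sign of the real part, these will let me rewrite every statement of Theorem \ref{th:main} in frequency--domain language.

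First I would translate condition (a) of Theorem \ref{th:main}. Non-singularity of $\mathcal{A}$ is equivalent to $G(s)$ being invertible as a rational matrix, i.e.\ $\det G(s)\not\equiv 0$. By the Schur identity, the non--zero finite eigenvalues of $(\mathcal{E},\mathcal{A})$ consist of the finite zeros of $\det G(s)$ (which are the finite poles of $G^{-1}$, condition (a) of Theorem \ref{th:main2}) together with those finite eigenvalues of $(E,A)$ which remain after cancellation with poles of $\det G(s)$. A standard fact about descriptor systems, obtained by looking at the Rosenbrock pencil, is that the surviving eigenvalues of $(E,A)$ are precisely the uncontrollable and unobservable finite eigenvalues of $(E,A,B,C,D)$, which gives condition (c) of Theorem \ref{th:main2}. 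Conversely, given (a) and (c) of Theorem \ref{th:main2}, every non-zero finite eigenvalue of $(\mathcal{E},\mathcal{A})$ lies in the open left half--plane, which, after reciprocation, is condition (a) of Theorem \ref{th:main}.

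Second I would handle the index condition. With $\mathcal{A}$ non-singular, the zero eigenvalues of $\mathcal{A}^{-1}\mathcal{E}$ correspond to the infinite eigenvalues of $(\mathcal{E},\mathcal{A})$, and the Jordan structure at zero of $\mathcal{A}^{-1}\mathcal{E}$ encodes the Kronecker structure of $(\mathcal{E},\mathcal{A})$ at infinity. The condition ``the index of $0$ is at most $2$'' is equivalent to ``each infinite Kronecker block has size at most two,'' which by reading off the Smith--McMillan form of $G$ at infinity is equivalent to $G^{-1}(s)$ having a pole at infinity of order at most one, i.e.\ condition (b) of Theorem \ref{th:main2}. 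Under the rank condition (\ref{eq:fullRankInf}) this correspondence is clean; when (\ref{eq:fullRankInf}) fails, the extra Kronecker defect manifests as an uncontrollable or unobservable mode at infinity which cannot be stabilized by any $K$, so (\ref{eq:Objective1}) is infeasible and the ``only if'' direction holds vacuously, explaining why (\ref{eq:fullRankInf}) need not be hypothesized in Theorem \ref{th:main2}.

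The main obstacle is the careful bookkeeping of multiplicities in the first step: separating, among the finite roots of $\det(sE-A)\det G(s)$, the genuine invariant zeros of the Rosenbrock system matrix from the uncancelled uncontrollable/unobservable modes. I would make this rigorous by combining the Smith--McMillan form of $G(s)$ with a Kronecker--Weierstrass decomposition of $(\mathcal{E},\mathcal{A})$; the same decomposition then furnishes the Jordan--at--infinity analysis needed for the index step, so a single canonical--form computation discharges both obstacles simultaneously.
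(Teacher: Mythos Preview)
Your reduction runs in the wrong direction relative to the paper's logical structure. In the paper, Theorem~\ref{th:main2} is proved \emph{first}, by a direct frequency-domain argument, and Theorem~\ref{th:main} is then \emph{derived from} Theorem~\ref{th:main2} via Corollary~\ref{cor:cor2} and Lemma~\ref{lem:InfintyPole} (the paper states this explicitly at the end of Section~5). So invoking Theorem~\ref{th:main} to prove Theorem~\ref{th:main2} is circular here. The paper's own proof never touches the pencil $(\mathcal{E},\mathcal{A})$: it observes that $G_c^{-1}=G^{-1}L^{-1}-KL^{-1}$, so SPR-ness of $G_c$ forces the pole structure of $G^{-1}$ directly (Lemma~\ref{lem:spr} and Lemma~\ref{lem:prelim0}); sufficiency is obtained by an explicit construction of $L$ and $K$ (Lemma~\ref{lem:prelim02}); and Lemma~\ref{lem:descriptor2}/Lemma~\ref{lem:prelim1} separate the closed-loop eigenvalues into poles of $G_c$ and uncontrollable/unobservable eigenvalues, yielding condition~(c).

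Your handling of the rank hypothesis~(\ref{eq:fullRankInf}) is also incorrect, and this is a genuine gap even if one had an independent proof of Theorem~\ref{th:main}. You claim that when (\ref{eq:fullRankInf}) fails, objective~(\ref{eq:Objective1}) is infeasible, making the ``only if'' direction vacuous. That is false: take $n=m=1$, $E=0$, $A=1$, $B=0$, $C=1$, $D=1$. Then $[E\ B]=[0\ 0]$ violates (\ref{eq:fullRankInf}), yet $G\equiv 1$ is already SPR and $(E,A_c)=(0,1)$ has no finite eigenvalues, so both objectives hold with $K=0$, $L=1$. More importantly, you say nothing about the ``if'' direction when (\ref{eq:fullRankInf}) fails: the bridge from the pole-order condition on $G^{-1}$ to the index condition on $\mathcal{A}^{-1}\mathcal{E}$ (your second step, which is the content of Lemma~\ref{lem:InfintyPole}) genuinely requires (\ref{eq:fullRankInf}), so without it you cannot pass from conditions (a)--(c) of Theorem~\ref{th:main2} to the hypotheses of Theorem~\ref{th:main}. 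The paper avoids this entirely because its proof of Theorem~\ref{th:main2} is purely transfer-function based and never needs the rank condition.
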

\end{boxedminipage}
\end{center}

\vspace{.5em}
%%%%%%%%%%%%%%%%%%%%
\begin{ex}{\rm
Consider the system with $E=I$ and 
\[
A=\left[\begin{array}{rr}
-1  &0  \\
0   &1
\end{array}
\right]\,, \qquad
B=\left[\begin{array}{rr}
1  &0  \\
0   &1
\end{array}
\right]\,,\]\[
C=\left[\begin{array}{rr}
-2  &0  \\
0   &2
\end{array}
\right]\,, \qquad
D=\left[\begin{array}{rr}
1  &0  \\
0   &1
\end{array}
\right]
\]
for which
\[
G(s)=\left[\begin{array}{cc}
\displaystyle{\frac{s-1}{s+1}}  &0  \\
0   &\displaystyle{\frac{s+1}{s-1}}
\end{array}
\right]\,\]
and
\[
%\mbox{and}
%\qquad
G(s)^{-1}=\left[\begin{array}{cc}
\displaystyle{\frac{s+1}{s-1}}  &0  \\
0   &\displaystyle{\frac{s-1}{s+1}}
\end{array}
\right].
\]
So $G^{-1}$ has poles at $-1$ and $1$.
% which are also the eigenvalues of $A$.
%Also neither is uncontrollable or unobservable.
Since $G^{-1}$ has a pole at one,  it follows from Theorem \ref{th:main2} that
objectives (\ref{eq:Objective1}) and (\ref{eq:Objective2}) 
cannot be achieved for this system.
Note that $
\det[G(s)] \equiv  1
$.
Here
\ignore{
\[
\det(s \mathcal{E} - \mathcal{A}) =
\left[\begin{array}{cccc}
-1 -s   &0      &1   &0  \\
0       &1-s    &0   &1   \\
-2       &0      &1   &0 \\
0       &2      &0   &1 \\
\end{array}
\right]
\]
}
\[
\mathcal{E} = 
\left[\begin{array}{cccc}
1  &0      &0   &0  \\
0       &1  &0   &0   \\
0      &0      &0  &0 \\
0       &0     &0   &0\\
\end{array}
\right]
\,, \]\[
\mathcal{A} = 
\left[\begin{array}{rrrr}
-1   &0      &1   &0  \\
0       &1    &0   &1   \\
-2       &0      &1   &0 \\
0       &2      &0   &1 \\
\end{array}
\right],
\quad
\mathcal{A}^{-1}\mathcal{E} = 
\left[\begin{array}{rrrr}
     1  &   0  &   0    & 0		\\
     0   & -1   &  0   &  0		\\
     2    & 0   &  0   &  0		\\
     0 &    2    & 0   &  0
\end{array}
\right]
\]
So, $\mathcal{A}^{-1}\mathcal{E}$ has eigenvalues $-1, 1$ and $0$.
 }
\end{ex}
%%%%%%%%%%%%%%%%%%%%%%%%%%%%%%%%%%%
%%%%%%%%%%%%%%%%	Example		%%%%%%%%%%%%%%%%%%
\begin{ex}{[$A$ and $D$ singular]}
{\rm
Consider a system with $E=I$ and 
\[
A=\left[\begin{array}{rr}
0 &0  \\
0   &-1
\end{array}
\right]\,, \qquad
B=\left[\begin{array}{rr}
1  &0  \\
0   &1
\end{array}
\right]\,\]\[
C=\left[\begin{array}{rr}
1  &0  \\
0   &1
\end{array}
\right]\,, \qquad
D=\left[\begin{array}{rr}
1  &0  \\
0   &0
\end{array}
\right]
\]
which  has no uncontrollable or unobservable eigenvalues,
and for which
\[
G(s)=\left[\begin{array}{cc}
\displaystyle{\frac{s+1}{s}}  &0  \\
0   &\displaystyle{\frac{1}{s+1}}
\end{array}
\right],\]
and \[
%\mbox{and}
%\qquad
G(s)^{-1}=\left[\begin{array}{cc}
\displaystyle{\frac{s}{s+1}}  &0  \\
~\\
0   &\displaystyle{{s+1}}
\end{array}
\right]\,.
\]
So, $G^{-1}$ has a single finite pole at $-1$  which is also an eigenvalue of $A$.
%Also neither is uncontrollable or unobservable.
Since the order of infinity as a pole of $G^{-1}$ is one, it follows from Theorem \ref{th:main2} that
objectives (\ref{eq:Objective1}) and (\ref{eq:Objective2})  can be achieved for this system. Here
\[
\mathcal{A}=
\left[\begin{array}{rrrr}
0&0      &1   &0  \\
0       &-1   &0   &1   \\
1      &0      &1   &0 \\
0       &1      &0   &0
\end{array}
\right]
\,, \qquad
\mathcal{E}=
\left[\begin{array}{rrrr}
 1   &0      &0  &0  \\
0       &1   &0   &0   \\
0    &0      &0  &0 \\
0       &0      &0   &0
\end{array}
\right]\]\[
\mathcal{A}^{-1}\mathcal{E}=
\left[\begin{array}{rrrr}
 -1   &0      &0  &0  \\
0       &0   &0   &0   \\
1   &0      &0  &0 \\
0       &1      &0   &0
\end{array}
\right]
\]
So, $\mathcal{A}^{-1}\mathcal{E}$ has eigenvalues $-1$ and $0$ and the order of zero is two.
}
\end{ex}

\subsection{Main result (Part C): Controller construction for SPRification}
\label{sec:mainresC}
The next result provides a simple method for controller construction.
\begin{center}
\begin{boxedminipage}[t]{1.0\linewidth}
\begin{theorem}
\label{th:main3}
Consider a system described by (\ref{eq:ss1}).
There exist matrices $K$ and $L$, with $I-KD$ non-singular, such that objectives (\ref{eq:Objective1}) and (\ref{eq:Objective2}) hold if and only if  the following conditions hold.
\begin{itemize}
\item[(a)]
% $G^{-1}$ can be expressed as
\begin{equation}
\label{eq:GinvRat}
G(s)^{-1} = s H_1 + D_2 + R(s)
\end{equation}
where $H_1$  and $D_2$ are  constant matrices and 
either
\begin{itemize}
\item[(i)] $R=0$
or
\item[(ii)]
 \begin{equation}
 \label{eq:R}
R(s) =  C_2(sI-A_2)^{-1}B_2
\end{equation}
  where $A_2, B_2, C_2$ are constant matrices with  $A_2$  Hurwitz.
\end{itemize}
\item[(b)]
The uncontrollable and unobservable  eigenvalues of the system   have negative real part.
\end{itemize}

In either case, the gain matrices $L$ and  $K$, that achieve the desired objectives, are obtained as follows.
$L$ is any invertible matrix  that is chosen so that $LH_1$ is symmetric positive semi-definite and 
\begin{equation}
\label{eq:K}
K= D_2 -L^{-\prime}N
\end{equation}
where   $N$  is any matrix that satisfies:

\begin{itemize}
\item[(i)] $R=0$:
\[
N+N' > 0
\]
\item[(ii)] $R(s) = C_2(sI-A_2)^{-1}B_2$:
% $N$ is any matrix which satisfies
 \begin{equation}
 \label{eq:Ncond2}
 N+N' > (PB_2 - C_2' L)'Q^{-1}(PB_2 - C_2' L)
 \end{equation}
 where   $Q$ is any symmetric positive-definite matrix and 
  $P$ is the unique solution to
 \begin{equation}
 \label{eq:lyap}
 PA_2+ A_2'P + Q =0
 \end{equation}

 \end{itemize}
 \end{theorem}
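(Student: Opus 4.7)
The plan is to establish Theorem~\ref{th:main3} by leveraging Theorem~\ref{th:main2}. I would first show that conditions~(a) and (b) here are equivalent to conditions~(a), (b), (c) of Theorem~\ref{th:main2}, so the existence of \emph{some} $(K, L)$ achieving the objectives is already delivered by that earlier result. The equivalence is essentially by inspection of the decomposition $G(s)^{-1} = sH_1 + D_2 + R(s)$: the term $sH_1$ captures any pole of $G^{-1}$ at infinity and constrains its order to at most one (with $H_1 = 0$ iff there is no pole at infinity), $D_2$ is the finite value at infinity once that pole is removed, and $R(s) = C_2(sI - A_2)^{-1}B_2$ with $A_2$ Hurwitz is equivalent to the finite poles of $G^{-1}$ lying in the open left half-plane. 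Condition~(b) of the two theorems is identical.

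What remains is to verify that the specific $(K, L)$ produced by the construction achieve the objectives. Inverting $G_c = L(I - GK)^{-1}G$ and substituting $G^{-1} = sH_1 + D_2 + R$ together with $K = D_2 - L^{-\prime}N$ causes the $D_2$ contributions to cancel, leaving
\[
G_c(s)^{-1} \;=\; sH_1 L^{-1} + R(s) L^{-1} + L^{-\prime} N L^{-1}.
\]
Since SPR of $G_c$ is equivalent, via the identity $G_c^{-1} + (G_c^{-1})^{\prime} = G_c^{-1}(G_c + G_c^{\prime})(G_c^{-1})^{\prime}$, to pointwise positive-definiteness of the Hermitian part of $G_c^{-1}$ on a strip $\Re(s) \ge -\epsilon$, it suffices to verify this positivity. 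The constant block $L^{-\prime}(N + N^{\prime})L^{-1}$ is positive definite by hypothesis, and the requirement that $L H_1$ be symmetric positive semi-definite is engineered so that the $sH_1 L^{-1}$ block contributes non-negatively to the Hermitian part for $\Re(s) \ge 0$; in case~(i), where $R = 0$, strict positivity on the imaginary axis is then immediate and extends to a strip $\Re(s) \ge -\epsilon$ by continuity.

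Case~(ii) is the substantive step and the main technical obstacle. With $\Phi(s) := (sI - A_2)^{-1}$, the Lyapunov equation (\ref{eq:lyap}) yields the KYP-style identity $\Phi(j\omega)^{\prime}P + P\Phi(j\omega) = \Phi(j\omega)^{\prime}Q\Phi(j\omega)$. The key manipulation is a completion of squares: the obvious inequality
\[
\bigl[Q^{1/2}\Phi(j\omega)B_2 + Q^{-1/2}(C_2^{\prime}L - PB_2)\bigr]^{\prime}\bigl[Q^{1/2}\Phi(j\omega)B_2 + Q^{-1/2}(C_2^{\prime}L - PB_2)\bigr] \;\ge\; 0,
\]
after expansion and cancellation via the KYP identity, delivers
\[
L^{\prime}C_2\Phi(j\omega)B_2 + B_2^{\prime}\Phi(j\omega)^{\prime}C_2^{\prime}L \;\ge\; -(PB_2 - C_2^{\prime}L)^{\prime}Q^{-1}(PB_2 - C_2^{\prime}L),
\]
so that the Hermitian part of the $R$- and constant-blocks of $G_c^{-1}(j\omega)$, after the congruence sandwich by $L$, is bounded below by $(N + N^{\prime}) - (PB_2 - C_2^{\prime}L)^{\prime}Q^{-1}(PB_2 - C_2^{\prime}L)$, which is positive definite by (\ref{eq:Ncond2}). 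Once SPR of $G_c$ is established, closed-loop stability follows: SPR forces all poles of $G_c$ into $\Re(s) < -\epsilon$, and any eigenvalue of $(E, A_c)$ that is not a pole of $G_c$ must be an uncontrollable or unobservable mode of the closed-loop system, which condition~(b) places in the open left half-plane.
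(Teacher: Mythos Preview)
Your proposal is correct and follows essentially the same route as the paper. The paper packages the argument through Lemmas~\ref{lem:prelim0}, \ref{lem:prelim02}, and \ref{lem:prelim1}: it reduces SPR of $G_c$ to the frequency-domain inequality $L'G(\jmath\omega)^{-1}+G(\jmath\omega)^{-\prime}L+M+M'>0$, substitutes the decomposition of $G^{-1}$ to obtain $L'R(\jmath\omega)+R(\jmath\omega)'L+N+N'>0$, and then, for case~(ii), invokes the KYP Lemma as a black box together with a Schur complement to arrive at condition~\eqref{eq:Ncond2}. Your direct computation of $G_c^{-1}$ and the explicit completion-of-squares identity you write down is precisely the sufficiency half of that KYP step unpacked by hand, so the substance is the same. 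One small omission: you do not check that $I-KD$ is nonsingular for the constructed $K$; the paper handles this in the proof of Lemma~\ref{lem:prelim0} by noting that $N$ can always be enlarged without disturbing the inequality.
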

 \end{boxedminipage}
 \end{center}
 
 \vspace{1em}
 {\bf Comment:} A method for computing $A_2, B_2, C_2, D_2$ and $H_1$ is given in Section \ref{subsec:controller}.
 That section also contains a method for computing $L$.

\begin{ex}
\label{ex5}
{\rm
Consider the system with 
transfer function
\[
G(s) = \frac{s+1}{s-2} 
\]
This is a system with a controllable and observable state space realization given by
$E=1, A=2, B=1, C=3, D=1$.
Here
\[
G(s)^{-1} = \frac{s-2} {s+1} = 1 +\frac{-3} {s+1}
\]
Hence,
$G^{-1}$ can be expressed as in  \eqref{eq:GinvRat}   and \eqref{eq:R} with $A_2 =-1, B_2=1, C_2 = -3, D_2 =1$ and $H_1 = 0$.
Since $A_2$ is Hurwitz, it  follows from Theorem \ref{th:main3} that
objectives (\ref{eq:Objective1}) and (\ref{eq:Objective2}) 
can be achieved for this system.
Since $H_1 = 0$, $L$ can be any nonzero number.
Using \eqref{eq:K}, \eqref{eq:Ncond2} and \eqref{eq:lyap}  yields  $K=1 -N/L$,
$P=Q/2$ and 
\[
N>(Q/2 +3L)^2/2Q
\]
Considering $L>0$ we obtain
$
K< 1 -(Q+6L)^2/8LQ  
$.
The maximum value of the righthand side of the above inequality is $-2$  which occurs with $L^{-1}Q=6$.
Thus, $L=1$ and any $K<-2$ achieves the desired results. Considering $L<0$ we obtain
$
K> 1 -(Q+6L)^2/8LQ  
$.
The minimum value of the righthand side of the above inequality is $1$ occurs with $L^{-1}Q=-6$.
Thus, $L=-1$ and any $K> 1$ also achieves the desired results.

}
\end{ex}

%Condition (b) above means that if $\mathcal{A}^{-1}\mathcal{E}$ has a zero eigenvalue, that eigenvalue is non-defective,
%that is the multiplicity of zero as a root of the characteristic polynomial of $\mathcal{A}^{-1}\mathcal{E}$ is the same as the number of linear independent eigenvectors for
%the zero eigenvalue.

\subsection{Dynamic output feedback}
Here we show that 
{\em if one can achieve an internally stable SPR system  using  a proper dynamic controller
then, this can be achieved with a static controller.}

To see this,  consider  a general dynamic output feedback controller described by
\begin{equation}
\hat{u} = K \hat{y} \qquad\mbox{and} \qquad  \hat{z} = L \hat{y}
\end{equation}
where $K$ and $L$ are   rational  transfer functions.
We assume that $K$ and $L$ are proper in the sense that they have no poles at infinity.
Since $\hat{y} =G(\hat{u}+\hat{w})$, the transfer function from $\hat{w}$ to $\hat{z}$ is given by
\begin{equation}
\label{eq:Gc2}
 \hat{z} = G_c\hat{w} \quad \mbox{where} \quad  G_c = L(I-GK)^{-1}G
\end{equation}
Suppose $G_c$ is SPR and the closed-loop system is internally stable.
Then we claim that the finite poles of  $K$ and $L$ have negative real parts.

To see this, consider the closed-loop system with additional inputs $w_2$ and $w_3$:
\begin{align*}
\hat{y}= G(\hat{u}+\hat{w}), \quad
\hat{u}=K(\hat{y}+\hat{w}_2),\quad 
\hat{z} = L(\hat{y}+\hat{w}_3) 
\end{align*}
Then
%\[
%\hat{y}=G(K\hat{y}+K\hat{w}_2+\hat{w})
%\]
%Hence
%\[
%\hat{y}=(I-GK)^{-1}G\hat{w} + (I-GK)^{-1}GK\hat{w}_2
%\]
%and
\begin{equation}
\hat{z}=G_c \hat{w}+ G_c K\hat{w}_2 +L\hat{w}_3 \
\end{equation}
Internal stability of the closed loop system requires that  all the finite poles of  $G_cK$  and $L$  have negative real part.
With $G_c$ being SPR, $G_c^{-1}$ is also SPR and all its finite poles have negative real part.
Since $K=G_c^{-1}(G_cK)$ it now follows that all its finite poles of $K$ have negative real part.

From \eqref{eq:Gc2} we see that 
$G_c^{-1} =(G^{-1} -K)L^{-1}
$; hence
\begin{equation}
G^{-1} = G_c ^{-1}L+ K
\end{equation}
Since all the finite  poles of $L$, $K$ and $G_c^{-1}$ have negative real part, it now follows that all the finite poles of $G^{-1}$
have negative real part.
Also, since $G_c$ is SPR, it can have a most one pole at infinity.
Since $L$ and $K$ have no poles at infinity, it now follows that $G^{-1}$ has at most one pole at infinity.

Since output feedback does not affect uncontrollable and unobservable eigenvalues,
these eigenvalues must have negative real part for internal stability.
It now follows from Theorem \ref{th:main2} that the system can be made SPR  and internally stable with static output feedback.

Note that \cite{Huang} obtained the same result for systems with strictly proper transfer functions, that is systems with $E=I$ and $D=0$, using different proof techniques.

\section{Implications of main results}
 \subsection{Zero output dynamics}
 %It turns out that whether or not one can achieve the desired objectives stated in \eqref{eq:Objective1}-\eqref{eq:Objective2} depends solely on the 
%{\sf zero output dynamics} associated with  the original system \eqref{eq:ss1}.
Conditions (a) and (b) of Theorem \ref{th:main} have a nice interpretation in terms of the zero output dynamics of system \eqref{eq:ss1}.
The {\sf  zero output dynamics} of system \eqref{eq:ss1} are those dynamics which result when $w=0$ and the control input is chosen to keep the output precisely zero.
It follows from \eqref{eq:ss1} that these dynamics are described by
\begin{equation}
\label{eq:zeroOut}
\begin{array}{rcl}
E\dot{x} &=& Ax +Bu\\
0&=& Cx+Du
\end{array}
\end{equation}
that is, they are described by the descriptor system
 \begin{equation}
\mathcal{E} 
\left[\begin{array}{c}
\dot{x}\\\dot{u}
\end{array} \right]
=
\mathcal{A} 
\left[\begin{array}{c}
x \\ u
\end{array} \right]
\,.
\end{equation}
Thus,  the zero output dynamics are determined by the descriptor system characterized by  $(\mathcal{E}, \mathcal{A})$.
The requirement that $\mathcal{A}$ be non-singular is equivalent to $(\mathcal{E}, \mathcal{A})$ not having a zero eigenvalue.
In this case,  the  eigenvalues of  $(\mathcal{E}, \mathcal{A})$ are the inverse of the non-zero eigenvalues of $\mathcal{A}^{-1}\mathcal{E}$.
Hence, condition (a) of Theorem \ref{th:main} is equivalent to  the eigenvalues of $(\mathcal{E}, \mathcal{A})$  having negative real part,
that is, the zero output dynamics system is  asymptotically  stable.\newline

To obtain an interpretation of condition (b) of Theorem \ref{th:main}
we have the folllowing definition when the matrix $\mathcal{A}$ is invertible \cite{SaiijaAl2013Descriptor}.

\vspace{1em}
{\em The {\sf index }of  $( \mathcal{E}, \mathcal{A})$ or system (\ref{eq:zeroOut}) is the  smallest integer $k\ge0$ for which
the rank of 
$(\mathcal{A}^{-1}\mathcal{E})^{k+1}$
equals the rank of 
$(\mathcal{A}^{-1}\mathcal{E})^{k}$.}

\vspace{1em}
Since $\mathcal{E}$ is singular, $\mathcal{A}^{-1}\mathcal{E}$ has a zero eigenvalue; hence the index of $( \mathcal{E}, \mathcal{A})$ is at least one and
it equals the index of zero as an eigenvalue of $\mathcal{A}^{-1}\mathcal{E}$.
%\vspace{1em}
%{\em The {\sf index }of  $( \mathcal{E}, \mathcal{A})$ is the smallest integer $k^*$ for which
%the rank of 
%$(\mathcal{A}^{-1}\mathcal{E})^{k^*+1}$
%equals the rank of 
%$(\mathcal{A}^{-1}\mathcal{E})^{k^*}$
%}
The above discussion results in the following corollary of Theorem \ref{th:main}.
%\vspace{2em}
%\newpage

\begin{cor}
Consider a system described by (\ref{eq:ss1})
that satisfies  rank condition (\ref{eq:fullRankInf}).
There exist matrices $K$ and $L$, with $I-KD$ non-singular, such that objectives (\ref{eq:Objective1}) and (\ref{eq:Objective2}) hold if and only if 
  the zero output dynamics (\ref{eq:zeroOut}) are stable and have a maximum index of two.
\end{cor}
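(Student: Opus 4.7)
The plan is to derive the corollary as a direct reformulation of Theorem~\ref{th:main}, using only the dictionary between spectral conditions on $\mathcal{A}^{-1}\mathcal{E}$ and dynamical properties of the pair $(\mathcal{E},\mathcal{A})$ that has already been laid out in the paragraphs preceding the statement. No new technical machinery is needed; the work consists in carefully matching conditions (a) and (b) of Theorem~\ref{th:main} to the two clauses of the corollary.

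For condition (a), I would first observe that stability of the zero output dynamics, by the definition of stability for a descriptor pair given earlier in the paper, means that every $\lambda$ with $(\lambda\mathcal{E}-\mathcal{A})v=0$ for some nonzero $v$ satisfies $\Re(\lambda)<0$. The existence of such a $\lambda=0$ is exactly the singularity of $\mathcal{A}$, so stability forces $\mathcal{A}$ to be nonsingular. With $\mathcal{A}$ invertible, the eigenvalue equation $\lambda\mathcal{E}v=\mathcal{A}v$ is equivalent to $\mathcal{A}^{-1}\mathcal{E}v=(1/\lambda)v$, which gives a bijection between the eigenvalues of $(\mathcal{E},\mathcal{A})$ and the nonzero eigenvalues of $\mathcal{A}^{-1}\mathcal{E}$ via $\lambda\mapsto1/\lambda$. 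Since $\Re(\lambda)<0$ if and only if $\Re(1/\lambda)<0$ for nonzero $\lambda$, this identifies stability of the zero output dynamics with condition (a) of Theorem~\ref{th:main}.

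For condition (b), I would invoke the definition of the index of $(\mathcal{E},\mathcal{A})$ recalled just above the corollary, together with the observation that, because $\mathcal{E}$ is singular, $\mathcal{A}^{-1}\mathcal{E}$ always has a zero eigenvalue. Consequently the rank-stabilisation integer $k$ defining the index of the pair coincides with the index of zero as an eigenvalue of $\mathcal{A}^{-1}\mathcal{E}$. Hence the requirement that the zero output dynamics have index at most two is nothing but condition (b) of Theorem~\ref{th:main}.

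Putting the two translations together, the hypotheses of the corollary are simultaneously equivalent to (a) and (b) of Theorem~\ref{th:main}, and the corollary follows. There is essentially no obstacle here; the only point one has to be slightly careful about is not conflating ``eigenvalue of the pair $(\mathcal{E},\mathcal{A})$'' with ``eigenvalue of $\mathcal{A}^{-1}\mathcal{E}$'', since the reciprocal correspondence excludes the zero eigenvalue, which is precisely why condition (a) of Theorem~\ref{th:main} splits into a nonsingularity clause and a spectral clause.
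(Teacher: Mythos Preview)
Your proposal is correct and follows essentially the same route as the paper: the corollary is presented there as an immediate consequence of the discussion preceding it, which establishes exactly the dictionary you use---nonsingularity of $\mathcal{A}$ plus the reciprocal correspondence $\lambda\leftrightarrow 1/\lambda$ for condition (a), and the identification of the index of $(\mathcal{E},\mathcal{A})$ with the index of zero as an eigenvalue of $\mathcal{A}^{-1}\mathcal{E}$ for condition (b). Nothing further is added in the paper, so your write-up matches it in both content and level of detail.
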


%\begin{cor}
%\label{th:main}
%Consider a system described by (\ref{eq:ss1})
%which satisfies  rank condition (\ref{eq:fullRankInf}).
%
%There exist matrices $K$ and $L$, with $I-KD$ non-singular, such that objectives (\ref{eq:Objective1}) and (\ref{eq:Objective2}) hold if and only if  
%the zero output dynamics (\ref{eq:zeroOut}) are stable and have a maximum index of two.
%\end{cor}

\subsection{Eigenvalues of $\mathcal{A}^{-1}\mathcal{E}$}
If $E=0$ then $\mathcal{E}$ is zero; hence, $\mathcal{A}^{-1}\mathcal{E}$ is zero which
means that zero is the only eigenvalue  of $\mathcal{A}^{-1}\mathcal{E}$  and its index is one.\newline

If $E\neq0$, let
$(X, Y)$ be any full rank decomposition of $E$, that is,
   \begin{equation}
   E=XY^{\prime}\end{equation}
    where $X$ and $Y$ are full rank matrices \cite{SaiijaAl2013Descriptor}.
Also, let
\begin{equation}
\label{eq:invcalA0}
\left[\begin{array}{cc}
\tilde{A}	&\tilde{B}	\\
\tilde{C}	&\tilde{D}
\end{array}
\right]
=
\mathcal{A}^{-1}
\end{equation}
where $\tilde{A}$ has the same dimensions as $A$ and let
    \begin{equation}
\label{eq:Atilde}
E_1= Y'\tilde{A}X
\,.
\end{equation}
The next result which is proven in the Appendix provides relationships between the eigenvalues of $\mathcal{A}^{-1}\mathcal{E}$  and $E_1$ and is useful in checking conditions (a) and (b) of Theorem \ref{th:main}. 
\begin{lemma}
\label{lem:reduce}
When $\mathcal{A}$ is non-singular and $E\neq 0$, the non-zero eigenvalues of  $\mathcal{A}^{-1}\mathcal{E}$  and $E_1$ are the same 
and  the index of zero\footnote{If a matrix does not have an eigenvalue at zero we say the index of zero as an eigenvalue is zero.} as eigenvalue of $E_1$ equals $l-1$,
 where
  the index of zero as an eigenvalue of $\mathcal{A}^{-1}\mathcal{E}$ is $l$.
  \end{lemma}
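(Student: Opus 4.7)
\medskip

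\textbf{Proof proposal for Lemma \ref{lem:reduce}.} The plan is to exploit a full-rank factorization of $\mathcal{E}$ so that $\mathcal{A}^{-1}\mathcal{E}$ can be written as a product $PQ$, with $E_1$ arising as the flipped product $QP$, and then apply the standard ``$PQ$ vs.\ $QP$'' identities.

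First I would lift the full-rank factorization $E=XY'$ of the $(1,1)$ block of $\mathcal{E}$ to a full-rank factorization of the whole matrix by setting
\[
\mathcal{X}:=\begin{bmatrix} X \\ 0 \end{bmatrix},\qquad \mathcal{Y}':=\begin{bmatrix} Y' & 0 \end{bmatrix},
\]
so that $\mathcal{E}=\mathcal{X}\mathcal{Y}'$, with $\mathcal{X}$ having full column rank $r=\mathrm{rank}(E)$ and $\mathcal{Y}'$ having full row rank $r$. Putting $P:=\mathcal{A}^{-1}\mathcal{X}$ and $Q:=\mathcal{Y}'$, one gets $\mathcal{A}^{-1}\mathcal{E}=PQ$, and a direct block computation using \eqref{eq:invcalA0} and \eqref{eq:Atilde} yields
\[
QP \;=\; \begin{bmatrix} Y' & 0 \end{bmatrix}\begin{bmatrix} \tilde A & \tilde B \\ \tilde C & \tilde D \end{bmatrix}\begin{bmatrix} X \\ 0 \end{bmatrix} \;=\; Y'\tilde A X \;=\; E_1.
\]
Since $\mathcal{A}^{-1}$ is invertible, $P$ inherits full column rank from $\mathcal{X}$, and $Q$ has full row rank by construction.

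Next I would invoke the classical determinantal identity $\det(\lambda I_{n+m}-PQ)=\lambda^{n+m-r}\det(\lambda I_r-QP)$, which immediately gives the first claim: $\mathcal{A}^{-1}\mathcal{E}$ and $E_1$ share the same non-zero eigenvalues (with multiplicities). The new ingredient, and what I expect to be the main obstacle, is the relationship between the indices, which needs a careful rank bookkeeping. The key identity is
\[
(PQ)^{k+1} \;=\; P\,(QP)^{k}\,Q \qquad (k\ge 0),
\]
combined with the observation that, because $P$ has full column rank and $Q$ has full row rank, left-multiplying by $P$ and right-multiplying by $Q$ preserve rank. Hence
\[
\mathrm{rank}\bigl((\mathcal{A}^{-1}\mathcal{E})^{k+1}\bigr) \;=\; \mathrm{rank}\bigl(E_1^{\,k}\bigr) \qquad \text{for all } k\ge 0.
\]

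From this rank equality the index shift follows by chasing definitions. Since $\mathcal{E}$ is singular while $\mathcal{A}$ is not, $\mathcal{A}^{-1}\mathcal{E}$ is singular, so its index $l$ is at least $1$. By definition, $l$ is the smallest integer with $\mathrm{rank}((\mathcal{A}^{-1}\mathcal{E})^{l+1})=\mathrm{rank}((\mathcal{A}^{-1}\mathcal{E})^{l})$, which by the boxed rank equality is equivalent to $\mathrm{rank}(E_1^{\,l})=\mathrm{rank}(E_1^{\,l-1})$; and minimality of $l$ translates into minimality of $l-1$ for this latter condition. Thus the index of zero as an eigenvalue of $E_1$ is exactly $l-1$, with the boundary case $l=1$ corresponding to $E_1$ being nonsingular (index zero under the paper's convention), which is consistent since $l=1$ forces $\mathrm{rank}(E_1)=\mathrm{rank}(E_1^{\,0})=r$.
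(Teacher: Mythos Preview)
Your proposal is correct and follows essentially the same route as the paper: the paper also lifts $E=XY'$ to $\mathcal{E}=\mathcal{X}\mathcal{Y}'$, identifies $E_1=\mathcal{Y}'\mathcal{A}^{-1}\mathcal{X}$, uses the $PQ$/$QP$ determinantal identity for the non-zero eigenvalues, and then the same rank identity $\mathrm{rank}\big((\mathcal{A}^{-1}\mathcal{E})^k\big)=\mathrm{rank}(E_1^{k-1})$ (via $(\mathcal{A}^{-1}\mathcal{E})^k=\mathcal{A}^{-1}\mathcal{X}\,E_1^{k-1}\,\mathcal{Y}'$ with $\mathcal{A}^{-1}\mathcal{X}$ full column rank and $\mathcal{Y}'$ full row rank) to deduce the index shift. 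The only cosmetic difference is that the paper writes out the determinant chain explicitly rather than citing the identity.
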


%%%%%%%%%%%%%%%%%%%%%%% Ignore
\ignore{
Then
\[
\mathcal{A}^{-1}\mathcal{E} =
\left[\begin{array}{cc}
\tilde{A}E	&0\\
* &0
\end{array}
\right]
\]
Hence the non-zero eigenvalues of $\mathcal{A}^{-1}\mathcal{E}$ equal the nonzero eigenvalues of $\tilde{A}E$.
Also,
\[
(\mathcal{A}^{-1}\mathcal{E})^k =
\left[\begin{array}{cc}
(\tilde{A}E)^k	&0\\
* &0
\end{array}
\right]
\]
Hence   condition (b) of Theorem \ref{th:main} is equivalent to the following condition:
If zero is an eigenvalue of $\tilde{A}E$ then  its index is at most two.
These considerations yield the following corollary to Theorem \ref{th:main}.
%
%Let $(X, Y)$ be any {\sf full rank decomposition} of $E$, that is, $E=XY^\prime$ with $X$ and $Y$ full rank.
%Then
% and
%define
%\begin{equation}
%\fbox{$
%E_1=Y^\prime M _{11}X
%$}
%\end{equation}

%\vspace{1em}
%{\em If zero is an eigenvalue of $E_1$, it is considered  {\sf non-defective} if the rank of $E_1^2$ equals the rank of $E_1$.}

\begin{cor}
Suppose that the following conditions hold for  system (\ref{eq:ss1}).
\begin{itemize}
\item[(a)] $\mathcal{A}$ is non-singular.
\item[(b)] The  non-zero eigenvalues of $E_1$ have negative real part.
\item[(c)] If zero is an eigenvalue of  $E_1$  then its index is at most
one.
\end{itemize}
Then, there exist matrices $K$ and $L$, with $I-KD$ non-singular, such that objectives (\ref{eq:Objective1}) and (\ref{eq:Objective2}) hold.

Conversely, suppose that there exist matrices $K$ and $L$, with $I-KD$ non-singular, such that objectives  (\ref{eq:Objective1}) and (\ref{eq:Objective2}) hold  and 
rank condition (\ref{eq:fullRankInf}) holds.
%rank condition (\ref{eq:fullRankInf}) is satisfied then 
Then  conditions (a),  (b) and (c) hold.
\end{cor}
 }
%%%%%%%%%%%%%%%%%%%%%%%%% Ignoreend

%\newpage
%\subsection{Special cases}

\subsection{Non-descriptor (non-algebraic) systems}

Recall that a descriptor system is defined by both differential equations and a set of algebraic equations. 
In our context, both of these  constraints 
are captured via the structure of the $E$ matrix. Classical systems whose dynamics 
are described by differential equations only give rise to an invertible $E$ matrix. Such systems 
are sometimes referred to as   normal systems. Since this  term is 
itself loaded and has different meanings in different areas of systems theory, we shall 
refer to a system with an invertible $E$ matrix as a {\sf non-algebraic} system.\newline 

In this case, without loss of generality, we can consider $E=I$ and such a system is described by
\begin{equation}
\label{eq:ssn}
\begin{array}{rcl}
\dot{x} &=& Ax +B(u +w)\\
y&=& Cx+D(u+w)
\end{array}
\end{equation}
Also, rank conditions (\ref{eq:fullRankInf}) hold. 
If we consider $(I,I)$ as a full rank decomposition of $E=I$ then, $E_1 = \tilde{A}$ and, using Lemma \ref{lem:reduce},
we have the following corollary to Theorem \ref{th:main}.

\vspace{1em}

\begin{cor}
\label{cor:main}
%Consider a system described by (\ref{eq:ssn})
%
There exist matrices $K$ and $L$, with $I-KD$ non-singular,  such that objectives (\ref{eq:Objective1}) and (\ref{eq:Objective2}) hold  for a non-algebraic system described by  (\ref{eq:ssn})
if and only if  the following conditions hold.
\begin{itemize}
\item[(a)] $\mathcal{A}$ is non-singular.
\item[(b)] The non-zero eigenvalues of $\tilde{A}$ have negative real part.
\item[(c)] If zero is an eigenvalue of  $\tilde{A}$  then its index is 
one.
\end{itemize}
\end{cor}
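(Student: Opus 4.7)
The plan is to derive this corollary as an essentially immediate consequence of Theorem \ref{th:main} combined with Lemma \ref{lem:reduce}, using the simplifications that the non-algebraic assumption $E=I$ affords.

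First, I would observe that when $E=I$, the rank conditions in \eqref{eq:fullRankInf} are automatic: the matrix $[E \; B]$ contains $I$ as a submatrix and so has full row rank, and $\left[\begin{smallmatrix} E \\ C \end{smallmatrix}\right]$ contains $I$ and so has full column rank. Consequently, Theorem \ref{th:main} applies without any auxiliary hypothesis, and the equivalence it asserts reduces to the two conditions: (i) $\mathcal{A}$ is non-singular with the non-zero eigenvalues of $\mathcal{A}^{-1}\mathcal{E}$ having negative real part, and (ii) the index of zero as an eigenvalue of $\mathcal{A}^{-1}\mathcal{E}$ is at most two.

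Next, I would take $(X,Y) = (I,I)$ as the full rank decomposition $E = XY'$ so that the matrix $E_1 = Y'\tilde{A}X$ from \eqref{eq:Atilde} collapses to $E_1 = \tilde{A}$, where $\tilde{A}$ is the upper-left block of $\mathcal{A}^{-1}$. Since $E \neq 0$, Lemma \ref{lem:reduce} then tells us that, assuming $\mathcal{A}$ is non-singular, the non-zero eigenvalues of $\mathcal{A}^{-1}\mathcal{E}$ coincide with the non-zero eigenvalues of $\tilde{A}$. This converts condition (i) above into exactly condition (b) of the corollary.

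The main issue to be careful about is the translation of the index condition, since a shift-by-one appears. Note that $\mathcal{E}$ is always singular (its lower-right block is zero), so $\mathcal{A}^{-1}\mathcal{E}$ always has a zero eigenvalue, forcing its index $l$ to be at least one. Lemma \ref{lem:reduce} says the index of zero as an eigenvalue of $E_1 = \tilde{A}$ equals $l-1$. Therefore the condition $l \le 2$ is equivalent to requiring that the index of zero for $\tilde{A}$ be at most one, which is condition (c) of the corollary: either $\tilde{A}$ has no zero eigenvalue, or its zero eigenvalue has index exactly one. Combining these observations produces the stated equivalence with Theorem \ref{th:main} in both directions, which completes the proof. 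No step should present any real difficulty beyond checking this index bookkeeping.
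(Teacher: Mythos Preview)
Your proposal is correct and follows exactly the paper's own route: the paper simply notes that the rank conditions \eqref{eq:fullRankInf} hold when $E=I$, takes $(X,Y)=(I,I)$ so that $E_1=\tilde{A}$, and invokes Lemma~\ref{lem:reduce} to translate the conditions of Theorem~\ref{th:main}. Your handling of the index shift ($l\le 2$ for $\mathcal{A}^{-1}\mathcal{E}$ becoming index at most one for $\tilde{A}$) is the only bookkeeping involved, and you have it right.
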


Conditions (b) and (c) above are equivalent to saying that the system $\dot{z} = \tilde{A}z$ is marginally stable.

\subsubsection{Special cases}
\paragraph{ $D$ non-singular}
If $D$ is non-singular then, $ \mathcal{A}$  is non-singular if and only if $A-BD^{-1}C$ is non-singular. In this case
\begin{equation}
\tilde{A}=[A-BD^{-1}C]^{-1}
\,.
\end{equation}
Hence $\tilde{A}$ is invertible and does not have a zero eigenvalue.
If,  in addition,  the system is non-algebraic then, the requirements for objectives (\ref{eq:Objective1}) and (\ref{eq:Objective2})  to be achieved simplify to:

\vspace{1em}
{\em All the  eigenvalues of $A-BD^{-1}C$ have negative real part.}

\vspace{1em}
\paragraph{ $A$ non-singular}
If $A$ is non-singular then, $ \mathcal{A}$  is non-singular if and only if the matrix $D-CA^{-1}B$ is non-singular. In this case
\begin{equation}
\tilde{A}= A^{-1} +A^{-1}B(D-CA^{-1}B)^{-1}CA^{-1}
\end{equation}

\paragraph{ Strictly proper systems: $D=0$}
One can readily show that condition (c) is  equivalent to   $CB$ being invertible.

%%%%%%%%%%%%%%%%%% Ignore
\ignore{
\subsection{$D=0$}
In this case,
\begin{equation}
\mathcal{N}(M) = \mathcal{R}(B)
\qquad \mbox{and} \qquad
\mathcal{R}(M) = \mathcal{N}(C)
\end{equation}
where $\mathcal{N}(M)$  denotes the null space or kernel of $M$ and $\mathcal{R}(M)$ denotes the range of $M$.

\vspace{1em}
{\bf To be completedÉÉ}

}

%%%%%%%%%%%%%%%%%%Ignore End

 %%%%%%%%%%%%%%%%%%%%%%%%%%%%%%%%%%%%%%
%%%%%%%%%%%		Examples		%%%%%%%%%%%%
%%%%%%%%%%%%%%%%%%%%%%%%%%%%%%%%%%%%%%
\ignore{

\section{Illustrative examples}

%%%%%%%%%%%%%%%%%%%%%
%
%%%%%%%%%%%%%%%%%%%%%

%%%%%%%%%%%%%%%%%		Example 1	%%%%%%%%%%%%%%%%%%%
\begin{ex}{\rm
Consider a simple integrator with transfer function $G(s)= 1/s$.
$G^{-1}$ has a single pole which is infinity and the order of this pole is one.
Hence by Theorem  \ref{th:main2} objectives  can be met.
This system  has a state space realization with $E=1$, $A=0$, $B=C=1$ and $D=0$.
Hence
\[
\mathcal{A} =\left[\begin{array}{cc}
0	&	1\\
1	&  0
\end{array}
\right]
\,, \qquad 
\mathcal{E} =\left[\begin{array}{cc}
1	&	0\\
0	&  0
\end{array}
\right]
\qquad 
\mathcal{A} ^{-1}\mathcal{E} =
\left[\begin{array}{cc}
0	&	0  \\
1	&  0
\end{array}
\right]
\]
Here $\mathcal{A} ^{-1}\mathcal{E}$ only has an eigenvalue at zero and its index is two.
}
\end{ex}

\begin{ex}{\rm
Consider the normal system with
\[
A=\left[\begin{array}{rr}
-1  &0  \\
0   &1
\end{array}
\right]\,, \qquad
B=\left[\begin{array}{rr}
1  &0  \\
0   &1
\end{array}
\right]\,, \qquad
C=\left[\begin{array}{rr}
-2  &0  \\
0   &2
\end{array}
\right]\,, \qquad
D=\left[\begin{array}{rr}
1  &0  \\
0   &1
\end{array}
\right]
\]
for which
\[
G(s)=\left[\begin{array}{cc}
\displaystyle{\frac{s-1}{s+1}}  &0  \\
0   &\displaystyle{\frac{s+1}{s-1}}
\end{array}
\right]\,
\qquad
\mbox{and}
\qquad
G(s)^{-1}=\left[\begin{array}{cc}
\displaystyle{\frac{s+1}{s-1}}  &0  \\
0   &\displaystyle{\frac{s-1}{s+1}}
\end{array}
\right]\,.
\]
So $G^{-1}$ has poles at $-1$ and $1$.
% which are also the eigenvalues of $A$.
%Also neither is uncontrollable or unobservable.
Since $G^{-1}$ has a pole at one,  it follows from Theorem \ref{th:main2} that
objectives (\ref{eq:Objective1}) and (\ref{eq:Objective2}) 
cannot be achieved for this system.
Here
\ignore{
\[
\det(s \mathcal{E} - \mathcal{A}) =
\left[\begin{array}{cccc}
-1 -s   &0      &1   &0  \\
0       &1-s    &0   &1   \\
-2       &0      &1   &0 \\
0       &2      &0   &1 \\
\end{array}
\right]
\]
}
\[
\mathcal{E} = 
\left[\begin{array}{cccc}
1  &0      &0   &0  \\
0       &1  &0   &0   \\
0      &0      &0  &0 \\
0       &0     &0   &0\\
\end{array}
\right]
\,, \qquad 
\mathcal{A} = 
\left[\begin{array}{rrrr}
-1   &0      &1   &0  \\
0       &1    &0   &1   \\
-2       &0      &1   &0 \\
0       &2      &0   &1 \\
\end{array}
\right]
\]
and
\[
\det[s \mathcal{E} - \mathcal{A}] = s^2-1 =(s+1)(s-1).
\]
This illustrates Corollary  \ref{cor:cor2} which equates the finite poles of $G^{-1}$ to the eigenvalues of $(\mathcal{E}, \mathcal{A})$
when there are no uncontrollable or unobservable eigenvalues.

Also,  
\[
A-BD^{-1}C = \left[\begin{array}{rr}
1   &0\\
0   & -1
\end{array}
\right]
\]
Thus the eigenvalues of $A-BD^{-1}C$ are the same as the  poles of $G^{-1}$.
Moreover,
\[
E_1 = A^{-1} +A^{-1}B(D-CA^{-1}B)^{-1}CA^{-1}= \left[\begin{array}{rr}
1   &0\\
0   & -1
\end{array}
\right]
\]
Thus the inverse of the nonzero eigenvalues of the above matrix are the same as the  poles of $G^{-1}$
Note that
$
\det[G(s)] = 1
$.

}
\end{ex}

%%%%%%%%%%%%%%%%	Example		%%%%%%%%%%%%%%%%%%
\begin{ex}{[$A$ and $D$ singular.]}
{\rm
Consider the normal system,
\[
A=\left[\begin{array}{rr}
0 &0  \\
0   &-1
\end{array}
\right]\,, \qquad
B=\left[\begin{array}{rr}
1  &0  \\
0   &1
\end{array}
\right]\,, \qquad
C=\left[\begin{array}{rr}
1  &0  \\
0   &1
\end{array}
\right]\,, \qquad
D=\left[\begin{array}{rr}
1  &0  \\
0   &0
\end{array}
\right]
\]
which  has no uncontrollable or unobservable eigenvalues
and for which
\[
G(s)=\left[\begin{array}{cc}
\displaystyle{\frac{s+1}{s}}  &0  \\
0   &\displaystyle{\frac{1}{s+1}}
\end{array}
\right]\,
\qquad
\mbox{and}
\qquad
G(s)^{-1}=\left[\begin{array}{cc}
\displaystyle{\frac{s}{s+1}}  &0  \\
~\\
0   &\displaystyle{{s+1}}
\end{array}
\right]\,.
\]
So, $G^{-1}$ has a single finite pole at $-1$  which is also an eigenvalue of $A$.
%Also neither is uncontrollable or unobservable.
Since the order of infinity as a pole of $G^{-1}$ is one, it follows from Theorem \ref{th:main2} that
objectives (\ref{eq:Objective1}) and (\ref{eq:Objective2})  can be achieved for this system,

Here
\[
\mathcal{A}=
\left[\begin{array}{rrrr}
0&0      &1   &0  \\
0       &-1   &0   &1   \\
1      &0      &1   &0 \\
0       &1      &0   &0
\end{array}
\right]
\,, \qquad
\mathcal{E}=
\left[\begin{array}{rrrr}
 1   &0      &0  &0  \\
0       &1   &0   &0   \\
0    &0      &0  &0 \\
0       &0      &0   &0
\end{array}
\right]
\]
and
\[
\det[s\mathcal{E} - \mathcal{A}]  =s+1\,.
\]
Thus the finite  poles of $G(s^{-1})$ are the same as the eigenvalues of $(\mathcal{E}, \mathcal{A})$.

Also,
\[
E_1  = \left[\begin{array}{rr}
-1   &0\\
0   & 0
\end{array}
\right]
\]
Thus the inverse of the nonzero eigenvalues of the above matrix are the same as the  poles of $G(s^{-1})$.
Also, zero is a non-defective eigenvalue of $E_1$.

}
\end{ex}

%%%%%%%%%%%%%%%%	Example	Descriptor System	%%%%%%%%%%%%%%%%%%

\begin{ex}{[Descriptor system].}
{\rm
Consider the descriptor system,
{\small
\[
E=\left[\begin{array}{rr}
0 &0  \\
1   &0
\end{array}
\right],	\ \
A=\left[\begin{array}{rr}
1 &0  \\
0   &1
\end{array}
\right],  \ \
B=\left[\begin{array}{rr}
-1  &-1  \\
0   &0
\end{array}
\right],	 \ \
C=\left[\begin{array}{rr}
-1  &1 \\
0   &1
\end{array}
\right]\,, \ \
D=\left[\begin{array}{rr}
-3  &0  \\
-1   &0
\end{array}
\right]
\]
}
Since $\det(sE-A)= 1$, this system has no finite eigenvalues.

Here
\[
G(s)=\left[\begin{array}{cc}
s-4  &s-1 \\
s-1   &s
\end{array}
\right]\,
\qquad
\mbox{and}
\qquad
G(s)^{-1}=\frac{1}{2s+1}
\left[\begin{array}{cc}
-s  &	s-1\\
s-1  &	-s+4
\end{array}
\right]\,.
\]
The transfer function $G^{-1}$ has a single finite pole at $-1/2$ and no pole at infinity.
%Also neither is uncontrollable or unobservable.
It now follows from Theorem \ref{th:main2} that
objectives (\ref{eq:Objective1}) and (\ref{eq:Objective2})  can be achieved for this system.

Here
\[
\mathcal{A}=
\left[\begin{array}{rrrr}
1&0          &-1   &-1  \\
0       &1   &0   &0   \\
-1      &1     &-3   &0 \\
0       &1      &-1  &0
\end{array}
\right]
\,, \qquad
\mathcal{E}=
\left[\begin{array}{rrrr}
0  &0      &0  &0  \\
1       &0   &0   &0   \\
0    &0      &0  &0 \\
0       &0      &0   &0
\end{array}
\right]
\]
and
\[
\det[s\mathcal{E} - \mathcal{A}]  =-(2s+1)\,.
\]
Thus the finite  pole of $G^{-1}$ is the same as the eigenvalue of $(\mathcal{E}, \mathcal{A})$.

Considering
\[
X=\left[\begin{array}{c}
0\\1
\end{array}\right]
\quad \mbox{and} \quad
Y=\left[\begin{array}{c}
1\\0
\end{array}\right]
\]
we obtain that
\[
E_1  =-2
\]
Thus the inverse of the  eigenvalue of $E_1$ is the same as the finite  pole of $G^{-1}$.
Since $G^{-1}$ is proper, $E_1$ has no zero eigenvalue.

}
\end{ex}
}

\section{SPRification in the frequency domain} 
%%%%%%%%%%%%%%		SPR		%%%%%%%%%%%%%
%\subsection{Non-proper SPR functions}

In this section we obtain the transfer function characterization (Theorem \ref{th:main2}) of the systems for which the desired objectives can be achieved.
%This result alone can also  be useful for determining whether or not desired objectives can be achieved.
This involves  conditions on the poles of $G^{-1}$  and the uncontrollable and unobservable eigenvalues of the original system which are necessary and sufficient for 
objectives (\ref{eq:Objective1}) and (\ref{eq:Objective2})  to  be achieved.
%To obtain our main result later, we  will convert these conditions into conditions on the state space matrices for the system.
First, we develop a useful preliminary result.

\subsection{A preliminary result}

Here we provide an initial characterization (Lemma \ref{lem:prelim0}) of the systems for which the desired objectives can be achieved.
This result alone can be useful in determining whether or not  the desired objectives can be achieved.
First, we need  the following result  on SPR transfer functions which are not necessarily proper.
%%%%%%%%%%%%%%%%%%
%%%%%%%	SPR Lemma	%%%
%%%%%%%%%%%%%%%%%%
%\subsection{Non-proper SPR transfer functions}

\begin{lemma}
\label{lem:spr}
A rational SPR transfer function $H$ has the following properties.
\begin{itemize}
\item[(a)] If  infinity is a pole of $H$  then, its order  is  one; also
$H_1$ is symmetric and positive semi-definite where
$
H_1 = \lim_{ s\rightarrow \infty} s^{-1}H(s)
$.
\item[(b)] The finite poles of $H$ have negative real part.
\item[(c)] $H^{-1}$ is SPR.
\end{itemize}
\end{lemma}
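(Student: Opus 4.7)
The proof splits naturally into the three claims (a)--(c). My strategy is a Laurent-based local analysis of $H+H'$ near infinity (for (a)) and near any finite pole (for (b)), followed by a short algebraic identity (for (c)). The common tool in (a) and (b) is the following simple observation: if $S$ is a real skew-symmetric matrix and $iS \succeq 0$, then $S=0$, because $iS$ is Hermitian with eigenvalues that appear in $\pm$ pairs, so semi-definiteness forces them all to be zero.

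For (a), I would write the polynomial part of $H$ as $s^kH_k + s^{k-1}H_{k-1}+\cdots$ with $H_k\neq 0$, where $k\geq 1$ is the order of the pole at infinity. On the arc $s=re^{i\theta}$, the matrix $H(s)+H(s)'$ is dominated, as $r\to\infty$, by $r^k M_k(\theta)$ with
\[
M_k(\theta)=\cos(k\theta)(H_k+H_k')+i\sin(k\theta)(H_k-H_k').
\]
Since $\Re(s)=r\cos\theta\geq -\epsilon$ is eventually equivalent to $\theta\in[-\pi/2,\pi/2]$, SPR forces $M_k(\theta)\succeq 0$ throughout this closed interval. For $k=1$, evaluating at $\theta=0$ and $\theta=\pi/2$ gives $H_1+H_1'\succeq 0$ and $i(H_1-H_1')\succeq 0$; the auxiliary fact forces $H_1=H_1'$, and then $H_1\succeq 0$. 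For $k\geq 2$, both $\theta=0$ and $\theta=\pi/k\leq \pi/2$ are admissible and give $\pm(H_k+H_k')\succeq 0$, hence $H_k$ is skew; then $\theta=\pi/(2k)$ yields $2iH_k\succeq 0$, which by the auxiliary fact forces $H_k=0$, contradicting $H_k\neq 0$.

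For (b), suppose a finite pole $\lambda$ satisfies $\Re(\lambda)>-\epsilon$, and Laurent-expand $H(s)=\sum_{j\geq -r}A_j(s-\lambda)^j$ with $A_{-r}\neq 0$, $r\geq 1$. For $t>0$ small and $s=\lambda+te^{i\phi}$, the point $s$ stays in the SPR region for every $\phi\in[0,2\pi)$, and $H(s)+H(s)'$ is dominated, as $t\to 0$, by $t^{-r}$ times
\[
\cos(r\phi)(A_{-r}+A_{-r}')-i\sin(r\phi)(A_{-r}-A_{-r}').
\]
The same angle-selection argument as in (a), now with $\phi$ unrestricted, forces $A_{-r}=0$, a contradiction. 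Hence every finite pole satisfies $\Re(\lambda)\leq -\epsilon<0$.

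For (c), parts (a) and (b) together show that $H$ has no poles in $\Re(s)\geq -\epsilon'$ for some $0<\epsilon'\leq \epsilon$, and $H+H'>0$ there forces $H(s)$ to be non-singular throughout (else $v'(H+H')v=0$ for some non-zero null vector $v$), so $H^{-1}$ is analytic on that region. The congruence identity
\[
H'\bigl(H^{-1}+H^{-\prime}\bigr)H \;=\; H+H',
\]
verified by multiplying out, then gives $H^{-1}+H^{-\prime}=H^{-\prime}(H+H')H^{-1}\succ 0$ on $\Re(s)\geq -\epsilon'$, so $H^{-1}$ is SPR. The main obstacle is the matrix-valued angular analysis in (a) and (b): once the ``skew-is-forbidden'' lemma is isolated, the rest of (a) and (b) reduces to a finite case analysis in the angular variable, and the congruence identity makes (c) essentially a one-liner.
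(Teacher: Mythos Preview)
Your argument is correct and follows the same three-part plan as the paper: polynomial/Laurent expansion plus an angular analysis near infinity for (a), the analogous local Laurent analysis near a putative closed-right-half-plane pole for (b), and the congruence $H^{-\prime}(H+H')H^{-1}=H^{-1}+H^{-\prime}$ for (c). The one genuine difference in execution is that the paper reduces (a) and (b) to \emph{scalar} inequalities by fixing an arbitrary complex vector $u$ and studying $h(s)=u'H(s)u$; the leading coefficient $h_m=u'H_mu$ is then a single complex number, and one chooses $\theta\in[-\pi/2,\pi/2]$ so that $\arg(h_m)+m\theta$ lands in the forbidden sector $(\pi/2,3\pi/2)$, forcing $h_m=0$ for $m\ge 2$ and $h_1\ge 0$ for $m=1$. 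Your matrix-level route replaces this scalar rotation trick with the ``$S$ real skew-symmetric and $iS\succeq 0$ implies $S=0$'' observation. Both work; the paper's scalar reduction has the small advantage of not assuming the Laurent coefficients are real matrices, whereas your auxiliary lemma does. This is immaterial in the paper's setting (real state-space data), and in any case your argument becomes coefficient-agnostic if you also evaluate at $\theta=-\pi/(2k)\in[-\pi/2,0]$, which yields $-i(H_k-H_k')\succeq 0$ and, combined with the $\theta=\pi/(2k)$ evaluation, forces $H_k=H_k'$ directly without invoking the skew lemma.
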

The appendix contains a proof.

We have now the following preliminary result.
\begin{lemma}
\label{lem:prelim0}
Given $L$ non-singular,  there exists a matrix $K$, with $I-KD$ non-singular,  such that $L(I-GK)^{-1}G$ is SPR if and only if 
the following conditions hold.
\begin{itemize}
\item[(a)]
If  infinity is a pole of $G^{-1}$  then, its order  is  one; also
$L'H_1$ is symmetric and positive semi-definite where
\begin{equation}
H_1 = \lim_{ s\rightarrow \infty} s^{-1}G(s)^{-1}
\end{equation}
\item[(b)]
The finite poles of $G^{-1}$ have negative real part.
\item[(c)]
There exists  a  matrix $M$   such that
\begin{equation}
\label{eq:L'GinvIneq}
L'G( \jmath \omega)^{-1}+ G( \jmath \omega) ^{-\prime} L+M +M'  >0 \
\end{equation}
for $-\infty \le \omega \le \infty$, 
and $I+L^{-\prime}MD$ is non-singular.
\end{itemize}

The matrix $K$ is given by
\begin{equation}
\label{eq:K=-LM}
K=-L^{-\prime}M
\end{equation}

\end{lemma}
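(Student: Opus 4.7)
The plan is to exploit Lemma \ref{lem:spr}(c), which says a rational function is SPR iff its inverse is, in order to work with $G_c^{-1}$ instead of $G_c$. The payoff is that inverting $G_c = L(I-GK)^{-1}G$ gives the affine relation
\begin{equation*}
G_c^{-1} = (G^{-1} - K)L^{-1},
\end{equation*}
so the finite poles of $G_c^{-1}$ coincide with those of $G^{-1}$ (since $K$, $L$ are constant) and the behavior at infinity is controlled by $G^{-1}$ plus a constant. Reparametrising $K = -L^{-\prime}M$, i.e.\ $M = -L'K$, I would then form the $L$-congruence of the Hermitian part of $G_c^{-1}$:
\begin{equation*}
L'\bigl[G_c^{-1}(s) + G_c^{-1}(s)'\bigr]L \;=\; L'G^{-1}(s) + G^{-1}(s)'L + M + M'.
\end{equation*}
Because conjugation by the constant nonsingular $L$ preserves both positive (semi)definiteness and the pole structure, the SPR characterisation for $G_c^{-1}$ delivered by Lemma \ref{lem:spr} will translate term by term into conditions (a), (b), (c) on $G^{-1}$.

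For the necessity direction, I would assume $K$ is such that $G_c$ is SPR with $I-KD$ invertible. Lemma \ref{lem:spr}(c) gives that $G_c^{-1}$ is SPR, and Lemma \ref{lem:spr}(a)--(b) then apply to $G_c^{-1}$: the finite-pole condition gives (b); the pole-at-infinity condition says $\lim_{s\to\infty} s^{-1}G_c^{-1}(s) = H_1 L^{-1}$ must be symmetric PSD, which via the identity $L'(H_1L^{-1})L = L'H_1$ is equivalent to $L'H_1$ being symmetric PSD, giving (a). Setting $M = -L'K$ and applying the $L$-congruence above on $s = \jmath\omega$ gives the positivity in (c), while $I-KD = I+L^{-\prime}MD$ is just a restatement of well-posedness.

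For the sufficiency direction, I would set $K := -L^{-\prime}M$ and verify the hypotheses of Lemma \ref{lem:spr} for $G_c^{-1} = (G^{-1}+L^{-\prime}M)L^{-1}$: (b) ensures the finite poles of $G_c^{-1}$ lie in the open left half plane; (a) ensures at most a simple pole at infinity with residue $H_1L^{-1}$ symmetric PSD; and (c), read backwards through the $L$-congruence, ensures $G_c^{-1}(\jmath\omega) + G_c^{-1}(\jmath\omega)' > 0$. Since $G_c^{-1}$ then satisfies the conditions of a SPR function, applying Lemma \ref{lem:spr}(c) once more yields that $G_c$ itself is SPR, closing the loop.

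The main obstacle I expect is upgrading the strict positivity on the imaginary axis (condition (c)) to strict positivity on a full strip $\{\Re s \geq -\epsilon\}$, as demanded by the SPR definition \eqref{eq:SPRdefn}; the pole at infinity needs particular care because the bound on (c) at $\omega = \pm\infty$ must be read as a limit governed by $L'H_1$. The standard remedy is a compactness argument on the one-point compactification of the imaginary axis together with a factoring-out of the $sH_1$ component; the uniform lower bound obtained there combines with analyticity of $G^{-1}$ off its finite poles (all in $\Re s < 0$ by (b)) to extend positivity to a small left neighborhood. A subsidiary but essential bookkeeping point is the equivalence $L'H_1 \succeq 0$ and symmetric $\Longleftrightarrow H_1L^{-1} \succeq 0$ and symmetric, which is what allows passage between the pole-at-infinity hypothesis phrased for $G_c^{-1}$ and the one phrased for $G^{-1}$ in condition (a).
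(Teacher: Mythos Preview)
Your overall strategy matches the paper's: pass to $G_c^{-1}$ via Lemma~\ref{lem:spr}(c), use the affine identity $G_c^{-1}=(G^{-1}-K)L^{-1}$, and work with the $L$-congruence $L'G_c^{-1}L=L'G^{-1}-L'K$. The sufficiency sketch is also on target, and you correctly flag the imaginary-axis-to-strip upgrade as the crux; the paper carries this out exactly by writing $G^{-1}(s)=sH_1+H_0(s)$, shrinking the margin by a $\beta I$ so that $L'H_0+M-\beta I$ is SPR, and then absorbing the $sH_1$ term using $L'H_1\ge 0$.

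There is, however, a genuine gap in your necessity argument. Condition~(c) demands strict positivity for $-\infty\le\omega\le\infty$, i.e.\ \emph{including} the endpoints. With your choice $M=-L'K$, the SPR property of $L'G_c^{-1}L$ gives
\[
L'G(\jmath\omega)^{-1}+G(\jmath\omega)^{-\prime}L+M+M' \;=\; L'H_0(\jmath\omega)+H_0(\jmath\omega)'L-L'K-K'L \;>\;0
\]
only for \emph{finite} $\omega$; passing to the limit $\omega\to\pm\infty$ yields merely $\ge 0$, and the inequality can degenerate. For instance $G^{-1}(s)=1/(s+1)$, $L=1$, $K=0$ gives an SPR $G_c=G$, yet $2\Re G(\jmath\omega)^{-1}=2/(1+\omega^2)\to 0$. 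So $M=-L'K$ does \emph{not} in general verify~(c). The paper's remedy is to abandon the identification $M=-L'K$ in the necessity direction and instead pick any $M$ with $M+M'> -L'K-K'L$ (e.g.\ $M=-L'K+\delta I$), which forces strict inequality uniformly, including at $\omega=\pm\infty$; one then checks separately that $\delta$ can be chosen so that $I+L^{-\prime}MD=(I-KD)+\delta L^{-\prime}D$ stays non-singular. Note this means the $M$ produced in necessity need not satisfy $-L^{-\prime}M=K$ for the \emph{original} $K$: the final line $K=-L^{-\prime}M$ in the lemma is a construction for the sufficiency direction, not a claim about the given $K$.
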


\begin{proof}
By Lemma \ref{lem:spr}, the transfer function
 $G_c = L(I-GK)^{-1}G$   is  SPR if and only if  $G_c^{-1}$ is SPR.
Since 
 $L$ is non-singular, 
\begin{equation}
\label{eq:Kcondition}
G_c^{-1} = G^{-1}(I-GK)L^{-1}   = G^{-1}L^{-1} - KL^{-1}
\end{equation}
Also, $G_c^{-1}$ being SPR is equivalent to
\begin{equation}
\label{eq:L'GCL}
 L'G_c^{-1}L = L'G^{-1} -L'K
 \end{equation}
being SPR.\newline

Now suppose that  $L'G_c^{-1}L$ is SPR.
It follows from \eqref{eq:L'GCL} that $L'G_c^{-1}L$ and $G^{-1}$ have the same poles and the order of infinity as a pole of
$L'G_c^{-1}L$  and  $G^{-1}$  is the same.
Also
$\lim_{s \rightarrow \infty} s^{-1} L'G_c^{-1}L = L'H_1$.
Thus properties (a) and (b) of the lemma hold.
Since $L'G_c^{-1}L $ is  SPR, it has no poles on the imaginary axis and, using \eqref {eq:L'GCL},
\begin{equation}
\label{eq:L'G}
L'G(\jmath \omega)^{-1} + G(\jmath \omega)^{-\prime}L  -L'K - K'L  >0 
\end{equation}
for $-\infty < \omega < \infty$.
If we express $G^{-1}$ as
\begin{equation}
G^{-1}(s) = s H_1 + H_0(s)
\end{equation}
then $H_0$ does not have a pole at infinity.
% and we can let $H_0(\infty) = \lim_{s \rightarrow \infty} H_0(s)$.
Since $L'H_1$ is symmetric, 
\begin{align}
%\label{eq:SPRGc}
&L'G(\jmath \omega)^{-1}  + G(\jmath \omega)^{-\prime} L 
\nonumber\\
&= \jmath \omega L'H_1 - \jmath \omega L'H_1 +L'H_0(j\omega) + H_0(\jmath \omega)'L		\nonumber \\
&=
L'H_0(j\omega) + H_0(\jmath \omega)'L
\label{eq:GinvH0}
\end{align}
 and \eqref{eq:L'G} is equivalent to
 \begin{equation}
 L'H_0(j\omega) + H_0(\jmath \omega)'L    -L'K - K'L  >0  
 \end{equation}
for $-\infty < \omega < \infty$.
 Hence
 \begin{equation}
 L'H_0(j\omega) + H_0(\jmath \omega)'L    -L'K - K'L  \ge 0  
 \end{equation}
 for $-\infty < \omega < \infty$.
 Consider now  any $M$ satisfying $M+M' > -L'K - K'L $  with $I +L^{-\prime}MD$ non-singular.
 This results in 
  \begin{equation}
  \label{eq:ineqLH0}
 L'H_0(\jmath \omega) + H_0(\jmath \omega)'L   +M +M'  > 0  
 \end{equation}
 for $-\infty < \omega < \infty$.
 Combining this with  \eqref{eq:GinvH0} yields the desired inequality \eqref{eq:L'GinvIneq} in (c).\newline

 Now suppose that  (a)-(c) hold.
 Inequality  \eqref{eq:L'GinvIneq}  and \eqref{eq:GinvH0} imply  \eqref{eq:ineqLH0}.
 Now choose $\beta >0$ small enough so that
 \begin{equation}
   \label{eq:ineqLH02}
 L'H_0(\jmath \omega) + H_0(\jmath \omega)'L   +M +M'  -2 \beta I > 0  
 \end{equation}
 for $-\infty < \omega < \infty$.
  It follows from (b) that all the finite poles of  $L'H_0$ have negative real parts.
  This along with \eqref{eq:ineqLH02}  implies that $L'H_0 +M -\beta I$ is SPR; thus there exists $\epsilon_1 >0$ such that
  \begin{equation}
  L'H_0(s) + H_0(s)'L +M +M' - 2\beta I >0   \end{equation}
for $\Re(s) \ge - \epsilon_1$.
 Consider any $\epsilon_2 >0$ for which  $\epsilon_2 H_1 \le \beta I$.
 Letting $\epsilon = \min\{\epsilon_1, \epsilon_2\}$ and recalling that $L'H_1$ is symmetric positive semi-definite, we obtain that
  whenever $\Re(s) \ge - \epsilon$,
 \begin{align*}
& L'G(s)^{-1} +M + G(s)^{-\prime}L +M' \\
&= (s +\bar{s})L'H_1 + L'H_0(s) + H_0(s)'L +M+M' \\ \nonumber
 &> -2 \epsilon_2 H_1 +2 \beta I   \ge 0 \nonumber
 \end{align*}
This implies that $L'G(s)^{-1} +M$ is SPR.
Letting $K=-L^{-\prime}M$, we have $M= -L'K$. Hence $L'G^{-1} -L'K$ is SPR.
Without loss of generality we can consider $M$ sufficiently large so that $I+L^{-\prime}MD$ is non-singular.
(To see this, consider $M= \kappa I$ and note that  $ I+\kappa L^{-\prime}D$ is singular only when $\kappa=-\alpha^{-1}$  where $\alpha$ is a nonzero eigenvalue of $ L^{-\prime}D$.)
Thus $I-KD = I+L^{-\prime}MD$ is non-singular.

\end{proof}

\begin{ex}{\rm
Recalling the transfer function in Example \ref{ex5}, $G^{-1}$ has no infinite pole and a single finite pole at $-1$.
Hence $H_1=0$ and $L$ can be any non-negative number. Also $D=1$ and 
$G(j\omega)^{-1} + G(j\omega)^{-'} = 2-6/(1+\omega^2)$.
With $L=1$,  inequality   \eqref{eq:L'GinvIneq} is satisfied if $M>2$; hence, using \eqref{eq:K=-LM} any $K<-2$ will render an SPR closed system.
Similarily $L=-1$ results in  $K>1$.
}
\end{ex}

\begin{remark}
Note that inequality \eqref{eq:L'GinvIneq}  is equivalent to $M>M_0$ and
\[
%\begin{equation}
LG(\jmath \omega) +  G(\jmath \omega)^{\prime}  L^\prime + G(\jmath\omega)(M_0+M'_0) G(\jmath\omega)'  \ge 0
\]
for $-\infty  <  \omega <  \infty$.
%\end{equation}

\end{remark}

\subsection{Proof of Theorem \ref{th:main2}}
We first obtain the following result from Lemma \ref{lem:prelim0}.
\begin{lemma}
\label{lem:prelim02}
Given a square rational  transfer function $G$, there  exist matrices $L$ and $K$, with 
  $I-KD$ non-singular,  such that $L(I-GK)^{-1}G$ is SPR if and only if 
the following conditions hold.  
\begin{itemize}
\item[(a)] 
If  infinity is a pole of $G^{-1}$,   then its order  is  one.
\item[(b)] The finite poles of $G^{-1}$ have negative real part.
\end{itemize}
\end{lemma}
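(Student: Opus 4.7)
My strategy is to reduce Lemma~\ref{lem:prelim02} to Lemma~\ref{lem:prelim0} by exploiting the additional freedom to choose $L$. The forward direction will be almost immediate once I verify that any feasible $L$ must be non-singular; the reverse direction amounts to constructing $L$ so that the extra hypotheses of Lemma~\ref{lem:prelim0} can be met.

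For necessity, suppose $L$ and $K$ are such that $G_c := L(I-GK)^{-1}G$ is SPR. I would first observe that $L$ must be non-singular. Indeed, SPR forces $G_c(s) + G_c(s)^\prime > 0$ at every non-pole $s$ in a right half-plane, and positive-definiteness of the Hermitian part of a matrix implies the matrix itself is invertible (if $G_c(s)v=0$ then $v^\prime (G_c(s)+G_c(s)^\prime )v = 0$). Hence $G_c$ is non-singular as a rational matrix; since $G$ is non-singular (otherwise $G^{-1}$ would not appear in (a), (b)) and $(I-GK)^{-1}$ is non-singular, so is $L$. Lemma~\ref{lem:prelim0} then applies, and its conditions (a), (b) are precisely (a), (b) of the present lemma.

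For sufficiency, assume (a), (b). The crucial step is producing an invertible $L$ for which $L^\prime H_1$ is symmetric positive semi-definite, where $H_1 = \lim_{s\to\infty} s^{-1}G(s)^{-1}$. If $H_1 = 0$, any invertible $L$ works; otherwise, taking a singular value decomposition $H_1 = U\Sigma V^\prime$ and setting $L = UV^\prime$ gives $L^\prime H_1 = V\Sigma V^\prime \ge 0$, with $L$ orthogonal. With such $L$ fixed, setting $H_0(s) := G(s)^{-1} - sH_1$, the same computation used in the proof of Lemma~\ref{lem:prelim0} yields
\[
L^\prime G(\jmath\omega)^{-1} + G(\jmath\omega)^{-\prime}L = L^\prime H_0(\jmath\omega) + H_0(\jmath\omega)^\prime L,
\]
and by (b) the rational matrix $H_0$ is proper with no poles on the imaginary axis, so the right-hand side is uniformly bounded in $\omega$. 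Choosing $M = \kappa I$ with $\kappa$ sufficiently large (and $\kappa$ chosen outside the finite exceptional set for which $I+\kappa L^{-\prime}D$ is singular, exactly as noted at the end of the proof of Lemma~\ref{lem:prelim0}) then satisfies inequality~\eqref{eq:L'GinvIneq}, and setting $K = -L^{-\prime}M$ completes the construction via the sufficiency half of Lemma~\ref{lem:prelim0}.

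The only non-routine point is the SVD construction that produces an $L$ compatible with condition (a) of Lemma~\ref{lem:prelim0}; once that is in hand, the remaining steps are a direct invocation of Lemma~\ref{lem:prelim0} together with standard boundedness and perturbation arguments.
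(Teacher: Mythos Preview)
Your proof is correct and follows essentially the same route as the paper: reduce to Lemma~\ref{lem:prelim0}, build $L$ from an SVD of $H_1$ so that $L'H_1\ge 0$, use boundedness of $L'H_0(\jmath\omega)+H_0(\jmath\omega)'L$ to choose $M=\kappa I$ large enough (avoiding the finitely many $\kappa$ for which $I+\kappa L^{-\prime}D$ is singular), and set $K=-L^{-\prime}M$. One small remark: the properness of $H_0$ comes from condition~(a), not~(b); and your explicit argument that any $L$ rendering $G_c$ SPR must be non-singular is a detail the paper simply assumes when invoking Lemma~\ref{lem:prelim0}.
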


\begin{proof}
The necessity of conditions (a)-(b) follows from Lemma \ref{lem:prelim0}.

We now prove sufficiency of conditions (a)-(b).
When conditions (a) and (b) hold,
\begin{equation}
\label{eq:invG=sH1+}
G(s)^{-1} = sH_1+H_0(s)
\end{equation}
where $H_0$ has no pole at infinity and all its poles have  negative real part.
 Let $L$ be any non-singular matrix for which 
$L^\prime H_1
$
is positive semi-definite.
 (The  construction of $L$ is described in Section \ref{subsec:controller}.)
Since $H_0$ has no imaginary poles  and no pole at infinity, there exists $\kappa_0$ such that
\begin{equation}
\label{eq:beta0}
L^\prime H_0(\jmath \omega) +H_0(\jmath \omega)^\prime L \ge - \kappa_0 
\end{equation}

for  $-\infty \le \omega \le \infty$,
Now choose any  $\kappa > \kappa_0/2$ such that $ I+\kappa L^{-\prime}D$ is non-singular and let $M= \kappa I$
%(This is possible because $ I+\beta LD$ is singular only when $\beta=-\alpha^{-1}$  where $\alpha$ is a nonzero eigenvalue of $LD$.)
%
It now follows from \eqref{eq:GinvH0} and \eqref{eq:beta0} that
\begin{equation}
\label{tildeH0plusBetaSPR}
L^\prime G(\jmath \omega)^{-1}  + G(\jmath \omega)^{-\prime} L + M +M^\prime   >0
\end{equation}
for  $-\infty \le \omega \le \infty$,
and $ I+ L^{-\prime}MD$ is non-singular.
It now follows from Lemma \ref{lem:prelim0} that
with $K=-L^{-\prime}M$,  the transfer function $L(I-GK)^{-1}G$ is SPR and $I-KD$ is non-singular.

\end{proof}

\begin{remark}[SISO systems]
For a SISO (scalar input scalar output)  system, $G(s)$ is a scalar; hence $\lambda$ is a pole of $G^{-1}$ if and only if
$\lambda$ is a {\sf zero} of $G$, that is, $G(\lambda) = 0$. Also infinity is a pole of $G^{-1}$ if and only if $G$ is strictly proper;
in this case,  infinity is a pole  of order $l$ of $G^{-1}$ if and only if $\infty$ is a zero
of order $l$ of $G$, that is,
$l\ge 1$ is the smallest integer for which
\[
\lim_{s \rightarrow \infty} s^l G(s) \neq 0.
\]
This is the same as the relative degree of the transfer function.
So, for SISO systems, conditions (a) and (b) of Lemma \ref{lem:prelim02}   are respectively equivalent to:
{\em
\begin{itemize}
\item[(a)] If $G$ is strictly proper, its relative degree is one.
\item[(b)] The finite zeros of $G$ have negative real part.
\end{itemize}
}
\end{remark}

%%%%%%%%%%%%%%%%%%%%%%%%%%%%%%%%%%%%%%%%%%%%%
%%%%%%%%%%%%%		Descriptor systems		%%%%%%%%
%%%%%%%%%%%%%%%%%%%%%%%%%%%%%%%%%%%%%%%%%%%%%
%\newpage

%

 %{\em
%$\lambda$ is an {\sf  eigenvalue} for $(E, A)$  or system (\ref{eq:sysDesc2})  if
%$\lambda E -A$ is singular.}

%\vspace{1em}
%The following definition is also for $\lambda = \infty$.

%\vspace{1em}
%\begin{defn}{[Pole.]}
%{\em $\lambda$ is an {\sf pole}  for  $G$  or  system (\ref{eq:sysDesc2}) if
%$\lim_{s \rightarrow \lambda} G(s)$ does not exist.
%}
%\end{defn}\mathcal{G}

%%%%%%%%%%%%%%%	Descriptor Lemma	%%%%%%%%%%%%%%%
The following result provides a relationship between the poles and eigenvalues of a descriptor system.
A proof can be found in the Appendix.
\begin{lemma}
%{[Descriptor result]}
\label{lem:descriptor2}
For   system (\ref{eq:ss1}),
$\lambda$ is an eigenvalue of $(E,A)$
if and only if it  is a finite pole,
%if at least one of the following conditions hold.
an uncontrollable eigenvalue or an unobservable eigenvalue.
% of system. (\ref{eq:sysDesc2}).
\end{lemma}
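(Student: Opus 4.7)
\textbf{Proof plan for Lemma \ref{lem:descriptor2}.}

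I will prove the two directions separately, handling the easy implication by inspection and the harder converse via the Weierstrass canonical form.

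The direction ``pole or uncontrollable or unobservable $\Rightarrow$ eigenvalue'' is immediate. If $\lambda$ is uncontrollable, then by definition there is a nonzero row vector $v^\prime$ with $v^\prime [\lambda E - A,\ B]=0$, so in particular $v^\prime(\lambda E - A)=0$ and $\lambda$ is an eigenvalue of $(E,A)$; the unobservable case is symmetric. If $\lambda$ is a finite pole of $G(s)=C(sE-A)^{-1}B+D$, then because $G$ is analytic wherever $sE-A$ is non-singular, we must have $\det(\lambda E - A)=0$.

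For the converse, suppose $\lambda$ is a finite eigenvalue of $(E,A)$. Since $(E,A)$ is regular, I invoke the Weierstrass decomposition: there exist non-singular $P,Q$ with
\[
PEQ=\begin{pmatrix} I_{n_1} & 0\\ 0 & N\end{pmatrix},\qquad PAQ=\begin{pmatrix} A_1 & 0\\ 0 & I_{n_2}\end{pmatrix},
\]
$N$ nilpotent. Partitioning $PB=\begin{pmatrix} B_1\\ B_2\end{pmatrix}$ and $CQ=[C_1\ C_2]$ conformably, the transfer function becomes
\[
G(s)=C_1(sI-A_1)^{-1}B_1 + C_2(sN-I)^{-1}B_2 + D,
\]
where the middle term is a polynomial in $s$ (expanding $(sN-I)^{-1}=-\sum_{k\ge 0}(sN)^k$, which terminates) and therefore contributes no finite poles. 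Since $\det(sN-I)$ is a nonzero constant, the finite eigenvalues of $(E,A)$ coincide with the eigenvalues of $A_1$, so $\lambda$ is an eigenvalue of $A_1$.

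Next I relate the rank conditions. Multiplying $[\lambda E - A,\ B]$ on the left by $P$ and on the right by $\operatorname{diag}(Q,I_m)$ yields the block matrix
\[
\begin{pmatrix} \lambda I - A_1 & 0 & B_1\\ 0 & \lambda N - I & B_2\end{pmatrix},
\]
whose row rank is $n$ if and only if $[\lambda I - A_1,\ B_1]$ has full row rank, because $\lambda N-I$ is always invertible. Hence $\lambda$ is an uncontrollable eigenvalue of $(E,A,B)$ iff it is an uncontrollable eigenvalue of the standard pair $(A_1,B_1)$; the analogous identity holds for observability and $(A_1,C_1)$. Now, if $\lambda$ is neither uncontrollable nor unobservable, then $\lambda$ is an eigenvalue of $A_1$ that is both controllable for $(A_1,B_1)$ and observable for $(A_1,C_1)$; the classical PBH result for standard state-space systems then guarantees that $\lambda$ is a pole of $C_1(sI-A_1)^{-1}B_1$, and hence of $G$. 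This completes the contrapositive and the lemma.

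The only delicate step is the rank bookkeeping under the Weierstrass transformation, together with recalling the classical PBH fact that, in the standard state-space setting, an eigenvalue of $A_1$ that is simultaneously controllable and observable must appear as a pole of the corresponding transfer function; the remainder is routine block algebra.
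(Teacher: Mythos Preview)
Your argument is correct but takes a different route from the paper. The paper works directly in the descriptor setting without invoking any canonical form: assuming $\lambda$ is an eigenvalue of $(E,A)$ but not a pole of $G$, it sets $X(s)=(sE-A)^{-1}B$ and splits into two cases. If $\lambda$ is not a pole of $X$, passing to the limit in $(sE-A)X(s)=B$ gives $(\lambda E-A)X_0=B$, so $[\lambda E-A\ \ B]$ has the same rank as $\lambda E-A$ and $\lambda$ is uncontrollable. If $\lambda$ is a pole of $X$, the leading Laurent coefficient $X_p\neq 0$ satisfies $(\lambda E-A)X_p=0$ (from $(sE-A)(s-\lambda)^p X(s)=(s-\lambda)^p B$) and $CX_p=0$ (from $(s-\lambda)^p G(s)=C(s-\lambda)^p X(s)+(s-\lambda)^p D$ and the assumption that $\lambda$ is not a pole of $G$), so $\lambda$ is unobservable. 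This is entirely self-contained: no Weierstrass form, no appeal to the standard state-space pole/PBH theorem. Your approach, by contrast, is structural: you reduce to the ordinary case via Weierstrass and then outsource the heart of the matter to the classical fact that a controllable and observable eigenvalue of $A_1$ is a pole of $C_1(sI-A_1)^{-1}B_1$. That is perfectly legitimate and arguably cleaner to read, but it is less elementary in that it imports two nontrivial ingredients (the Weierstrass decomposition and the standard-case result), whereas the paper's limit argument needs neither and would in fact \emph{reprove} the standard-case result as the special case $E=I$.
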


\begin{lemma}
\label{lem:prelim1}
Consider a system described by (\ref{eq:ss1}) and  matrices $K$ and $L$  where $I-KD$ is non-singular.
%which satisfies  rank condition (\ref{eq:fullRankInf}).
%
Then objectives (\ref{eq:Objective1}) and (\ref{eq:Objective2}) hold if and only if  the following conditions hold.
\begin{itemize}
\item[(a)]
$L(I-GK)^{-1}G$ is SPR
\item[(b)]
The uncontrollable and unobservable  eigenvalues of the system  have negative real part.
\end{itemize}
\end{lemma}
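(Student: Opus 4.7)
The proof hinges on two facts. First, condition~(a) of the lemma is literally Objective~(\ref{eq:Objective2}), so only the equivalence ``Objective~(\ref{eq:Objective1}) holds $\Leftrightarrow$ (b) holds, given (a)'' needs to be established. Second, I will apply Lemma~\ref{lem:descriptor2} to the \emph{closed-loop} descriptor system $(E,A_c,B_c,C_c,D_c)$ appearing in~\eqref{eq:descSS1}: it says that every eigenvalue of $(E,A_c)$ is either a finite pole of the closed-loop transfer function $G_c$ or an uncontrollable/unobservable eigenvalue of the closed-loop quadruple.

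The linking lemma I would prove first is the \emph{invariance of uncontrollable and unobservable eigenvalues under static output feedback}. Extracting $B_c=B(I-KD)^{-1}$ and $C_c=L(I-DK)^{-1}C$ from Calculation~1 in the Appendix, the identity
\begin{equation*}
\bigl[\,\lambda E - A_c \quad B_c\,\bigr]
 \;=\;
\bigl[\,\lambda E - A \quad B\,\bigr]
\begin{bmatrix} I & 0 \\ -(I-KD)^{-1}KC & (I-KD)^{-1} \end{bmatrix}
\end{equation*}
holds for every $\lambda$, with the right-hand factor a constant invertible matrix; hence the controllability pencils have the same rank at every $\lambda$. An analogous identity (premultiplying by an invertible constant matrix built from $L$, $D$ and $K$) handles the observability pencil. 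Consequently the closed-loop uncontrollable/unobservable eigenvalues coincide with those of the open-loop system $(E,A,B,C,D)$, and in particular the $L$-dependence of $C_c$ is irrelevant since $L$ is non-singular.

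With these tools, the two implications are immediate. \emph{Sufficiency} (a)+(b)$\Rightarrow$Objectives~(\ref{eq:Objective1}), (\ref{eq:Objective2}): take any $\lambda$ with $(\lambda E - A_c)v = 0$ for some $v\neq 0$. By Lemma~\ref{lem:descriptor2} applied to the closed loop, $\lambda$ is a finite pole of $G_c$, in which case $\Re(\lambda)<0$ by Lemma~\ref{lem:spr}(b) since $G_c$ is SPR, or $\lambda$ is a closed-loop uncontrollable/unobservable eigenvalue, which by the invariance identity is also an open-loop uncontrollable/unobservable eigenvalue and hence satisfies $\Re(\lambda)<0$ by~(b). \emph{Necessity}: Objective~(\ref{eq:Objective2}) is precisely (a); and if $\lambda$ is an open-loop uncontrollable or unobservable eigenvalue, then by invariance it is a closed-loop uncontrollable/unobservable eigenvalue, hence (Lemma~\ref{lem:descriptor2}) an eigenvalue of $(E,A_c)$, so Objective~(\ref{eq:Objective1}) gives $\Re(\lambda)<0$. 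The only nontrivial computational step is the invariance identity; once it is in place, the argument is a one-line application of Lemmas~\ref{lem:spr} and~\ref{lem:descriptor2}.
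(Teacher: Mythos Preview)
Your proposal is correct and follows essentially the same route as the paper: both argue sufficiency by combining (i) SPR $\Rightarrow$ finite poles of $G_c$ lie in the open left half-plane, (ii) invariance of uncontrollable/unobservable eigenvalues under static output feedback, and (iii) Lemma~\ref{lem:descriptor2} applied to the closed-loop pair $(E,A_c)$; and both dispatch necessity by noting that uncontrollable/unobservable eigenvalues are in particular eigenvalues of $(E,A_c)$. The only difference is that you spell out the rank-preserving identity for the controllability pencil explicitly, whereas the paper simply asserts that ``static output feedback does not change uncontrollable and unobservable eigenvalues''; your version is a welcome elaboration, and your observation that $L$ must be non-singular (forced by the SPR condition on $G_c$) is exactly what is needed for the analogous observability identity to go through.
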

\vspace{1em}

\begin{proof}
We first prove sufficiency of conditions (a)-(b).
Since   $G_c=L(I-GK)^{-1}G$ is SPR,
all its poles have negative real part.
One can readily show that
 static output feedback does not change uncontrollable and unobservable eigenvalues;
hence condition (b) implies that the uncontrollable eigenvalues of $(E, A_c, B)$ and  the unobservable eigenvalues
of $(E, C, A_c)$ have negative real part. 
It now follows from Lemma \ref{lem:descriptor2} that all the eigenvalues of $(E, A_c)$ have negative real part.
Hence  $(E, A_c)$ is stable. To prove necessity of conditions (a)-(b),  we simply note that the stability of $(E, A_c)$ 
%and Lemma \ref{lem:descriptor2} i
implies
condition (b).
\end{proof}
Theorem \ref{th:main2} is now a consequence of Lemmas \ref{lem:prelim02} and
 \ref{lem:prelim1}.

%%%%%%%%%%%%%%%%%%%%
%		Ignore
\ignore{
\begin{cor}
\label{lem:basic}
There exist matrices $K$ and $L$, with $I-KD$ non-singular,  such that objectives (\ref{eq:Objective1}) and (\ref{eq:Objective2})  can be achieved  for system $(E, A, B, C, D)$ if and only if all  the following conditions hold.
\begin{itemize}
\item[(a)] The finite poles of $G^{-1}$ have negative real part where $G$ is the system transfer function.
\item[(b)] If $G^{-1}$ has a pole at infinity then, its order  is one.
\item[(c)]
The uncontrollable and unobservable  eigenvalues of the system   have negative real part.
\end{itemize}
\end{cor}
}
%%%%%%%%%%%%%%%%%
%	Ignore End

%%%%%%%%%%%%%%%%%%%%%%%%%%%%%%%%%%
%%%%% 	Development of main results	%%%%
%%%%%%%%%%%%%%%%%%%%%%%%%%%%%%%%%%
%\newpage
\section{SPRification in state space}

%A major  step  in obtaining the  main result is to express $G(s^{-1})^{-1}$ as the transfer function of a normal system which we denote by $\tilde{G}$.  This can be achieved if  the system matrix
%$\mathcal{A}$
% defined in (\ref{eq:DescNew})  is non-singular.
%To determine conditions under which $\mathcal{A}$ is non-singular and as a first step in obtaining $\tilde{G}$, we first express $G^{-1}$ as the transfer function 
%of a new descriptor system.

%%%%%%%%%%%%%%%%%%%%%%%%%%%%%%%%%
%%%%%%%%%%%%	New descriptor system	%%%%%%
%%%%%%%%%%%%%%%%%%%%%%%%%%%%%%%%%%%%%%%%
\subsection{A new descriptor system }

The key innovation in obtaining Theorem \ref{th:main} is to express $G^{-1}$ as the transfer function of a new descriptor system  described by
$(\mathcal{E}, \mathcal{A}, \mathcal{B}, \mathcal{C}, 0)$ where
%\begin{equation}
%\label{eq:ss2}
%\begin{array}{rcl}
%\mathcal{E}\dot\xi &=& \mathcal{A}x +\mathcal{B}\eta\\
%\nu&=& \mathcal{C}\eta
%\end{array}
%\end{equation}

\begin{equation}
\label{eq:DescNew2}
\mathcal{E} =\left[\begin{array}{cc}
E  &0  \\
0   &0
\end{array}
\right],
\qquad 
\mathcal{A} =\left[\begin{array}{cc}
A &B  \\
C   &D
\end{array}
\right]
\end{equation}
and
\begin{equation}
\label{eq:CalBcalC}
\mathcal{B} =\left[\begin{array}{c}
0\\
I
\end{array}
\right],
\qquad 
\mathcal{C} =\left[\begin{array}{cc}
0   &		-I
\end{array}
\right]
\end{equation}
The transfer function for this system is
\begin{equation}
\label{eq:mathcalG}
\mathcal{G}(s) = \mathcal{C}(s\mathcal{E}-\mathcal{A})^{-1} \mathcal{B}
\end{equation}
and we have the following  result.
%%%%%%%%%%%%%%%%%%%%%%%%%%
\begin{lemma}
\label{lem:main2}
\begin{itemize}
\item[(a)]
\begin{equation}
\label{eq:GinvCalG}
G^{-1} = \mathcal{G}
\end{equation}
\item[(b)]
The uncontrollable eigenvalues of $(E, A,B)$ and $(\mathcal{E}, \mathcal{A}, \mathcal{B})$ are the same.
\item[(c)]
The unobservable eigenvalues of $(E,C,A)$ and $(\mathcal{E}, \mathcal{C}, \mathcal{A})$ are the same.

\end{itemize}
\end{lemma}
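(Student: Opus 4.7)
\textbf{Proof proposal for Lemma \ref{lem:main2}.}

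For part (a), the plan is a direct block-matrix inversion. Write
\[
s\mathcal{E}-\mathcal{A} = \begin{bmatrix} sE-A & -B \\ -C & -D \end{bmatrix},
\]
and assume $sE-A$ is nonsingular, which holds generically in $s$ since $(E,A)$ is regular. Form the Schur complement of the $(1,1)$ block, namely $-D - C(sE-A)^{-1}B = -G(s)$; by the standard block-inverse formula the $(2,2)$ block of $(s\mathcal{E}-\mathcal{A})^{-1}$ is $-G(s)^{-1}$. Since $\mathcal{C}=[0\ -I]$ and $\mathcal{B}=[0\ I]^\prime$ are designed to extract exactly this block (with a compensating sign), I obtain $\mathcal{G}(s) = G(s)^{-1}$. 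The identity extends to all complex $s$ by rationality of both sides.

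For part (b), the plan is to reduce the size of the input pencil by elementary column operations that are unimodular in $\lambda$. Writing
\[
[\lambda\mathcal{E}-\mathcal{A}\ \ \mathcal{B}] = \begin{bmatrix} \lambda E-A & -B & 0 \\ -C & -D & I \end{bmatrix},
\]
I add $D$ times the trailing block of $m$ columns (which carries $I$ in the lower row-block) to the middle block, clearing the $-D$ entry; then I use that trailing $I$ to zero out the $-C$ in the lower-left via further column operations. What remains is an identity block contributing rank $m$ independently, together with $[\lambda E-A\ \ B]$ occupying the top row-block. Hence
\[
\mathrm{rank}\,[\lambda\mathcal{E}-\mathcal{A}\ \ \mathcal{B}] = m + \mathrm{rank}\,[\lambda E-A\ \ B].
\]
The maximum possible ranks are $n+m$ and $n$ respectively, so the two matrices drop rank for exactly the same values of $\lambda$, giving equality of uncontrollable eigenvalues.

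Part (c) is entirely dual: apply the analogous row operations to the matrix obtained by stacking $\lambda\mathcal{E}-\mathcal{A}$ on top of $\mathcal{C}$, or equivalently transpose and invoke (b) for the transposed system $(E^\prime,A^\prime,C^\prime,B^\prime)$. The only minor obstacle I anticipate is the sign and block-dimension bookkeeping in (a); the rank manipulations in (b) and (c) are routine once the correct elementary operations are identified, and they respect the role of $\lambda$ as a parameter because the transformation matrices used are constant.
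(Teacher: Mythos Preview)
Your proposal is correct and follows essentially the same approach as the paper: a Schur-complement/block-inverse computation for (a), and elementary column (resp.\ row) operations using the trailing identity block in $\mathcal{B}$ (resp.\ $\mathcal{C}$) to reduce the augmented pencil for (b) and (c). The paper's only cosmetic difference is that it works with $\mathcal{A}-\lambda\mathcal{E}$ rather than $\lambda\mathcal{E}-\mathcal{A}$ and writes the reduced matrix directly without narrating the operations.
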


\begin{proof}
Recall that
$
G(s) = C(sE-A)^{-1}B + D$.
With  $\mathcal{E}$ and $\mathcal{A}$ given by (\ref{eq:DescNew2}),
we see that
\begin{align*}
&\left(\mathcal{A}-s \mathcal{E}\right)^{-1} =
\left[
\begin{array}{cc}
A-sE    &B  \\
C       &D
\end{array}
\right]^{-1}= \\
&\left[
\begin{array}{cc}
 *   &  *  \\
  *     &\left[D+C(sE-A)^{-1}B\right]^{-1}
\end{array}
\right]
 =
\left[
\begin{array}{cc}
*   &*  \\
*   &G(s)^{-1}
\end{array}
\right]
\end{align*}
Hence, recalling (\ref{eq:CalBcalC}), we see that
$
G(s)^{-1} = \mathcal{C}(s\mathcal{E} - \mathcal{A})^{-1}\mathcal{B}
={\mathcal G}(s)$.

To prove (b), note that
\begin{eqnarray*}
\mbox{rank}\left[
\begin{array}{cc}
\mathcal{A} - \lambda \mathcal{E}& \mathcal{B}
\end{array}
\right]
&=&
\mbox{rank} \left[
\begin{array}{ccc}
A - \lambda E& B        &0  \\
C              &D       &I
\end{array}
\right]\\
&=&
\mbox{rank} \left[
\begin{array}{ccc}
A - \lambda E& B        &0  \\
0             &0      &I
\end{array}
\right]
\end{eqnarray*}
Thus, 
 $\left[
\begin{array}{cc}
\mathcal{A} - \lambda \mathcal{E}& \mathcal{B}
\end{array}
\right]$
does not have maximum rank if and only if the same is true for $\left[\begin{array}{cc}
A - \lambda E& B
\end{array}
\right]$.
This means that  $\lambda$ is an uncontrollable eigenvalue of $(E, A, B)$ if and only if it is an uncontrollable eigenvalue of $(\mathcal{E}, \mathcal{A}, \mathcal{B})$.
To prove (c), note that
\begin{eqnarray*}
\mbox{rank}\left[
\begin{array}{c}
\mathcal{A} - \lambda \mathcal{E}\\ \mathcal{C}
\end{array}
\right]
&=&
\mbox{rank} \left[
\begin{array}{ccc}
A - \lambda E& B      \\
C              &D     \\
0   & -I
\end{array}
\right]\\
&=&
\mbox{rank} \left[
\begin{array}{ccc}
A - \lambda E   &   0  \\
C                   &0\\
0              &I
\end{array}
\right]
\end{eqnarray*}
Thus, $\left[
\begin{array}{c}
\mathcal{A} - \lambda \mathcal{E}\\ \mathcal{C}
\end{array}
\right]$ 
does not have maximum rank if and only if the same is true for $\left[
\begin{array}{c}
A - \lambda E    \\
C                   
\end{array}
\right]
$.
This means that $\lambda$ is an unobservable eigenvalue of $(E, C, A)$ if and only if it is an 
unobservable eigenvalue of $(\mathcal{E}, \mathcal{C}, \mathcal{A})$. 
\end{proof}

\subsection{Finite poles of $G^{-1}$ and the eigenvalues of $(\mathcal{E}, \mathcal{A})$}
%%%%%%%%%%%%%%		Corollary		%%%%%%%%%%%%%%
Consideration of Lemmas \ref{lem:main2} and \ref{lem:descriptor2} yields the following result
on the finite poles of $G^{-1}$.

\begin{cor}
%{[Descriptor result]}
\label{cor:cor2}
$\lambda$ is 
an eigenvalue of $(\mathcal{E},\mathcal{A})$
if and only if it is
a finite pole of $G^{-1}$  or
an
uncontrollable  or unobservable eigenvalue for $(E,A, B, C, D)$.

% at least one of the following conditions hold.

\end{cor}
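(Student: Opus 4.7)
My plan is to derive this corollary by applying Lemma \ref{lem:descriptor2} to the \emph{augmented} descriptor system $(\mathcal{E}, \mathcal{A}, \mathcal{B}, \mathcal{C}, 0)$ constructed in Lemma \ref{lem:main2}, rather than to the original system. Lemma \ref{lem:descriptor2}, read in its natural generality, says that for any descriptor system the eigenvalues of the matrix pencil decompose into three (possibly overlapping) classes: finite transfer function poles, uncontrollable eigenvalues, and unobservable eigenvalues. Applied to $(\mathcal{E}, \mathcal{A}, \mathcal{B}, \mathcal{C}, 0)$, it yields: $\lambda$ is an eigenvalue of $(\mathcal{E}, \mathcal{A})$ if and only if $\lambda$ is a finite pole of $\mathcal{G}$, an uncontrollable eigenvalue of $(\mathcal{E}, \mathcal{A}, \mathcal{B})$, or an unobservable eigenvalue of $(\mathcal{E}, \mathcal{C}, \mathcal{A})$.

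I then translate each of these three conditions back to the original system via the three parts of Lemma \ref{lem:main2}. Part (a) identifies $\mathcal{G}$ with $G^{-1}$, so finite poles of $\mathcal{G}$ are exactly finite poles of $G^{-1}$. Part (b) identifies the uncontrollable eigenvalues of the augmented triple $(\mathcal{E}, \mathcal{A}, \mathcal{B})$ with those of $(E, A, B)$. Part (c) identifies the unobservable eigenvalues of $(\mathcal{E}, \mathcal{C}, \mathcal{A})$ with those of $(E, C, A)$. Substituting these three identifications into the characterisation from the previous paragraph gives precisely the statement of the corollary.

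The one point that requires a brief check is that Lemma \ref{lem:descriptor2} is applicable to the augmented pencil, i.e.\ that $(\mathcal{E}, \mathcal{A})$ is regular in the sense that $\det(s\mathcal{E}-\mathcal{A})$ is not identically zero. This follows from the standing assumption that $(E,A)$ is regular together with the block identity $\left(\mathcal{A}-s\mathcal{E}\right)^{-1}$ computed in the proof of Lemma \ref{lem:main2}: since its $(2,2)$ block equals $G(s)^{-1}$ and exists as a rational function, the inverse of $\mathcal{A}-s\mathcal{E}$ exists for generic $s$, so $\det(s\mathcal{E}-\mathcal{A})\not\equiv 0$. With regularity in hand, the corollary is a direct two-line consequence of combining Lemmas \ref{lem:descriptor2} and \ref{lem:main2}; there is no real obstacle beyond recognising that the augmented system is the correct object on which to invoke the prior descriptor-system decomposition.
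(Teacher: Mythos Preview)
Your proposal is correct and is exactly the argument the paper intends: the corollary is stated in the paper as an immediate consequence of Lemmas~\ref{lem:main2} and~\ref{lem:descriptor2}, and what you have written is precisely that deduction---apply Lemma~\ref{lem:descriptor2} to the augmented system $(\mathcal{E},\mathcal{A},\mathcal{B},\mathcal{C},0)$, then translate each of the three resulting conditions back to the original system via parts (a), (b), (c) of Lemma~\ref{lem:main2}. Your added check that $(\mathcal{E},\mathcal{A})$ is regular is a sensible piece of hygiene that the paper leaves implicit.
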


\begin{remark}[SISO systems]
When $G(s)$ is scalar and $s$ is not an eigenvalue of $(E,A)$,
\begin{align*}
&\det
\left(
\mathcal{A} - s \mathcal{E}
\right) = 
\det\left(\left[\begin{array}{ccc}
A - s E   &   B \\
C                   &D
\end{array}
\right]
\right)
\\
&=
\det(A-sE) \det\left(D+C(sE-A)^{-1}B \right)\\
&=
\det(A-sE)G(s)
\end{align*}
Hence
\[
G(s)^{-1} =\frac{\det(A-sE)}{\det(\mathcal{A} - s \mathcal{E}) }
\]
and we have the following conclusion for SISO systems which are controllable and observable.

\vspace{1em}
{\em $\lambda$ is a finite pole of $G^{-1}$ if and only if $\lambda$ is an eigenvalue of $(\mathcal{E}, \mathcal{A})$ and,
if  $\lambda$ is also an eigenvalue of $(E, A)$, its  algebraic multiplicity as an eigenvalue of $(\mathcal{E}, \mathcal{A})$
is greater than its algebraic multiplicity as an eigenvalue of $(E,A)$.}
\end{remark}

%%%%%%%%%%%%%%%%%%%%%%%%%%%%%%%
%		Poles at infinity
%%%%%%%%%%%%%%%%%%%%%%%%%%%%%%%%%%%%%

\subsection{Infinite poles of $G^{-1}$ and the zero eigenvalues of $\mathcal{A}^{-1}\mathcal{E}$}
To characterize an infinite pole of $G^{-1}$ we need the following result for a general descriptor system
described by $(E, A)$.

\begin{lemma}
\label{lem:DesPhi}
Suppose $A$ is non-singular and $(E, A)$ has index $l\ge1$. 
Then,
\[
(sE-A)^{-1} = s^{l-1}\Phi_{l-1} +  s^{l-2}\Phi_{l-2} +\cdots +  \Phi_0 + T(s)
\]
where $\Phi_{l-1} \neq 0$ and $T$ is a rational function with  $\lim_{s \rightarrow \infty } T(s) =0$.
Also,
\begin{equation}
\label{eq:EPhi}
E\Phi_{l-1} = 0  \qquad \mbox{and} \qquad \Phi_{l-1}E = 0 
\end{equation}
\end{lemma}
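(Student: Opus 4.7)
The plan is to reduce everything to a Jordan-style block decomposition of $N := A^{-1}E$ and then read off the Laurent expansion of $(sE-A)^{-1}$ block by block. Since $A$ is invertible, I would first factor $sE-A = A(sN - I)$, which gives $(sE-A)^{-1} = (sN-I)^{-1}A^{-1}$, so it suffices to analyse $(sN-I)^{-1}$.

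Next, I would invoke that the index of $(E,A)$ coincides with the index of $0$ as an eigenvalue of $N$, so $l$ is precisely the size of the largest Jordan block of $N$ at the eigenvalue $0$. Picking an invertible $P$ that splits $\mathbb{C}^n$ into the generalised eigenspace for the nonzero eigenvalues of $N$ and the generalised eigenspace for $0$, we get
\begin{equation*}
P^{-1} N P = \begin{pmatrix} N_1 & 0 \\ 0 & N_2 \end{pmatrix}
\end{equation*}
with $N_1$ invertible and $N_2$ nilpotent satisfying $N_2^l = 0$ and $N_2^{l-1}\neq 0$. Then blockwise $(sN-I)^{-1}$ becomes the direct sum of $(sN_1-I)^{-1}$ (which is $O(s^{-1})$ at infinity, since $(sN_1-I)^{-1} = s^{-1}N_1^{-1}(I-s^{-1}N_1^{-1})^{-1}$) and
\begin{equation*}
(sN_2-I)^{-1} = -\bigl(I + sN_2 + s^2 N_2^2 + \cdots + s^{l-1}N_2^{l-1}\bigr),
\end{equation*}
a polynomial of degree exactly $l-1$ in $s$. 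Reconstituting and multiplying by $A^{-1}$ gives the claimed expansion, with
\begin{equation*}
\Phi_{l-1} = -P\begin{pmatrix} 0 & 0 \\ 0 & N_2^{l-1}\end{pmatrix}P^{-1}A^{-1},
\end{equation*}
which is nonzero because $N_2^{l-1}\neq 0$ and $P, A$ are invertible, and with the remainder $T(s)$ collecting the $(sN_1-I)^{-1}A^{-1}$ piece, which tends to $0$ as $s\to\infty$.

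Finally, for the two orthogonality relations I would avoid any further computation with the blocks and just match Laurent coefficients. Multiplying the expansion on the left by $(sE-A)$ and using $(sE-A)(sE-A)^{-1}=I$, the only contribution to $s^l$ on the left-hand side is $E\Phi_{l-1}$, while the right-hand side has no $s^l$ term; hence $E\Phi_{l-1}=0$. Multiplying on the right by $(sE-A)$ and using $(sE-A)^{-1}(sE-A)=I$ symmetrically yields $\Phi_{l-1}E=0$.

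The only mildly delicate point is the clean identification of the index of $(E,A)$ with the nilpotency index of the $N_2$-block; once that is in hand, the rest of the argument is essentially the scalar expansion $(1-sx)^{-1} = \sum s^k x^k$ applied inside a Jordan decomposition plus a one-line coefficient comparison, so I do not anticipate a serious obstacle.
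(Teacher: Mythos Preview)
Your argument is correct, but it proceeds differently from the paper's proof. The paper does not pass through a Jordan-style block decomposition of $N=A^{-1}E$. Instead it simply writes the Laurent expansion of $(sE-A)^{-1}$ with an a priori unknown leading exponent $l-1$, multiplies on the left by $sE-A$, and reads off the recursions $E\Phi_{l-1}=0$, $E\Phi_{k}=A\Phi_{k+1}$, and $(sE-A)T(s)-A\Phi_0=I$. From these it deduces $\Phi_{l-1}=(A^{-1}E)^k\Phi_{l-1-k}$ and hence that $\mathrm{rank}\,(A^{-1}E)^{k+1}<\mathrm{rank}\,(A^{-1}E)^{k}$ for $k<l$, while the remainder relation forces $\mathrm{rank}\,(A^{-1}E)^{l}=\mathrm{rank}\,(A^{-1}E)^{l+1}$; this is exactly the statement that $l$ is the index of $(E,A)$. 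Your approach inverts the logic: you start from the definition of the index as the nilpotency degree of the $N_2$-block, build the expansion explicitly from the finite geometric series $(sN_2-I)^{-1}=-\sum_{k=0}^{l-1}s^kN_2^{k}$, and obtain $\Phi_{l-1}\neq0$ and the correct degree immediately. The coefficient-matching step for $E\Phi_{l-1}=0$ and $\Phi_{l-1}E=0$ is the same in both proofs. Your route is more explicit and arguably shorter, at the cost of invoking the block decomposition of $N$; the paper's route is more self-contained but has to work a bit harder to identify the degree of the polynomial part with the index.
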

\begin{proof}
Appendix.
\end{proof}

%\subsection{Poles of $G^{-1}$ at infinity and zero eigenvalues of $E_1$}

%Thus zero is a non-defective eigenvalue of  $E_1$ if its index is one.

\vspace{1em}
\begin{lemma}
\label{lem:InfintyPole}
Suppose $\mathcal{A}$ is non-singular and rank condition (\ref{eq:fullRankInf}) holds.
Then, the order of infinity  as a pole at  $G^{-1}$ is $l$ where 
the index of $(\mathcal{E}, \mathcal{A})$ is $l+1$.
%Then, $E_1$ has an eigenvalue at zero of index $l$ if and only if

\end{lemma}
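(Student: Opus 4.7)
The plan is to leverage the descriptor realization of $G^{-1}$ from Lemma \ref{lem:main2}. By part (a) of that lemma, $G(s)^{-1} = \mathcal{G}(s) = \mathcal{C}(s\mathcal{E}-\mathcal{A})^{-1}\mathcal{B}$. Since $\mathcal{A}$ is non-singular and the index of $(\mathcal{E},\mathcal{A})$ is $l+1 \ge 1$, Lemma \ref{lem:DesPhi} applied to the pair $(\mathcal{E},\mathcal{A})$ gives the expansion
\[
(s\mathcal{E}-\mathcal{A})^{-1} = s^{l}\Phi_{l} + s^{l-1}\Phi_{l-1} + \cdots + \Phi_0 + T(s),
\]
with $\Phi_l \neq 0$, $\lim_{s\to\infty}T(s)=0$, and $\mathcal{E}\Phi_l = \Phi_l\mathcal{E} = 0$. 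Hence
$\mathcal{G}(s) = s^l\,\mathcal{C}\Phi_l\mathcal{B} + s^{l-1}\mathcal{C}\Phi_{l-1}\mathcal{B} + \cdots + \mathcal{C}T(s)\mathcal{B}$,
so the order of $\infty$ as a pole of $G^{-1}$ is at most $l$, and equals $l$ exactly when $\mathcal{C}\Phi_l\mathcal{B}\neq 0$. The case $l=0$ is immediate (the expansion is proper), so I will focus on $l\ge 1$.

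Partition $\Phi_l$ conformally with $\mathcal{E}$ as $\Phi_l = \bigl[\begin{smallmatrix}\Phi_{11}&\Phi_{12}\\ \Phi_{21}&\Phi_{22}\end{smallmatrix}\bigr]$. A direct computation using $\mathcal{B}=[0;I]$ and $\mathcal{C}=[0,-I]$ gives $\mathcal{C}\Phi_l\mathcal{B}=-\Phi_{22}$, so the task reduces to showing $\Phi_{22}\ne 0$. Expanding $\mathcal{E}\Phi_l=0$ and $\Phi_l\mathcal{E}=0$ in block form immediately yields $E\Phi_{11}=E\Phi_{12}=0$ and $\Phi_{11}E=\Phi_{21}E=0$. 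To get additional constraints, I match the coefficient of $s^l$ on both sides of $(s\mathcal{E}-\mathcal{A})(s\mathcal{E}-\mathcal{A})^{-1}=I$ and of its transpose relation, obtaining $\mathcal{A}\Phi_l=\mathcal{E}\Phi_{l-1}$ and $\Phi_l\mathcal{A}=\Phi_{l-1}\mathcal{E}$ (these relations live above the degrees where $T$ contributes, so they are clean). Expanding these block identities and imposing $\Phi_{22}=0$ produces, in particular,
\[
\Phi_{21}B = 0,\qquad C\Phi_{12} = 0,\qquad \Phi_{11}B + \Phi_{12}D = 0,\qquad C\Phi_{11} + D\Phi_{21} = 0.
\]

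The decisive step is to combine these with the rank condition (\ref{eq:fullRankInf}). From $E\Phi_{12}=0$ and $C\Phi_{12}=0$ the columns of $\Phi_{12}$ lie in the nullspace of $[E;C]$, which is trivial by the rank hypothesis, so $\Phi_{12}=0$. Symmetrically, $\Phi_{21}E=0$ and $\Phi_{21}B=0$ force $\Phi_{21}[E\;B]=0$, hence $\Phi_{21}=0$. Substituting these back collapses $\Phi_{11}B+\Phi_{12}D=0$ to $\Phi_{11}B=0$, and together with $\Phi_{11}E=0$ we get $\Phi_{11}[E\;B]=0$, yielding $\Phi_{11}=0$. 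Thus $\Phi_l=0$, contradicting $\Phi_l\ne 0$ from Lemma \ref{lem:DesPhi}. Hence $\mathcal{C}\Phi_l\mathcal{B}\ne 0$ and $\infty$ is a pole of $G^{-1}$ of order exactly $l$.

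I expect the main obstacle to be the bookkeeping around the coefficient-matching step: one must be careful that $T(s)$ does not leak into the $s^l$ and $s^{l+1}$ coefficients (it does not, since $T$ is proper), and that the relation $\mathcal{A}\Phi_l=\mathcal{E}\Phi_{l-1}$ is genuinely available for $l\ge 1$. Everything else is block-algebra that is driven cleanly by the two rank conditions, which is exactly the place where condition (\ref{eq:fullRankInf}) is used in an essential way.
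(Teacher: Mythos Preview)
Your argument is correct and is in fact cleaner than the paper's. The paper does not apply Lemma~\ref{lem:DesPhi} directly to $(\mathcal{E},\mathcal{A})$; instead it first introduces a full-rank decomposition $E=XY'$, uses a Woodbury-type identity to rewrite $(s\mathcal{E}-\mathcal{A})^{-1}$, and thereby reduces $G^{-1}$ to a smaller descriptor system $(E_1,I)$ with $E_1=Y'\tilde{A}X$, invoking Lemma~\ref{lem:reduce} to relate the index of $(\mathcal{E},\mathcal{A})$ to that of $(E_1,I)$. Lemma~\ref{lem:DesPhi} is then applied to $(E_1,I)$, and the non-vanishing of the top coefficient $C_1\Phi_{l-1}B_1$ is argued by contradiction using the identities $C\tilde{A}+D\tilde{C}=0$ and $\tilde{A}B+\tilde{B}D=0$ coming from $\mathcal{A}^{-1}\mathcal{A}=I$, which feed into the rank conditions on $[E;C]$ and $[E\ B]$.

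Your route bypasses the full-rank decomposition and Lemma~\ref{lem:reduce} entirely: you apply Lemma~\ref{lem:DesPhi} to $(\mathcal{E},\mathcal{A})$ itself, exploit the special block structure of $\mathcal{B}$, $\mathcal{C}$ and $\mathcal{E}$ to identify $\mathcal{C}\Phi_l\mathcal{B}=-\Phi_{22}$, and then use the recursion $\mathcal{A}\Phi_l=\mathcal{E}\Phi_{l-1}$, $\Phi_l\mathcal{A}=\Phi_{l-1}\mathcal{E}$ (which is exactly the $k=l-1$ instance of the relations \eqref{eq:Phi2} and their right-hand analogues in the proof of Lemma~\ref{lem:DesPhi}) to read off the needed block identities. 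The rank conditions then kill all four blocks of $\Phi_l$, giving the contradiction. This is a shorter and more transparent use of condition~\eqref{eq:fullRankInf}; the paper's reduction to $E_1$ is not needed for the lemma itself, though it is reused later in Section~\ref{subsec:controller} for controller construction.
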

\begin{proof}

%We now proceed to reduce the dimension of the state space associated with this transfer function.
%
If 
 $E=0$ then, $\mathcal{E}=0$; hence    $(\mathcal{E}, \mathcal{A})$ is  index one.
 Also,   $A$ must be   non-singular since $(E, A)$  is regular.
Since $\mathcal{A}$ is non-singular, $G(s) = D-CA^{-1}B$ must be non-singular.
Hence $G^{-1}$ has no poles at infinity, that is, the order of infinity as a pole of $G^{-1}$ is zero. 

If $E\neq0$,
let $(X, Y)$ be any full rank decomposition of $E$.
Then  $E=XY^{\prime}$ and 
\begin{equation}
\mathcal{E} = \mathcal{X} \mathcal{Y}^\prime
\end{equation}
where
\begin{equation}
\label{eq:XsYs}
\mathcal{X} = \left[\begin{array}{c}
X\\0
\end{array}\right]
\qquad
\mbox{and} \qquad
\mathcal{Y} = \left[\begin{array}{c}
Y\\0
\end{array}\right]
\,.
\end{equation}
Using the matrix inversion formula $(M+UNV)^{-1}=$
 \[
  M^{-1} -M^{-1}U(N^{-1} +VM^{-1}U)^{-1}VM^{-1}\]
  we see that
\begin{align*}
&(s\mathcal{E} - \mathcal{A})^{-1}  = (- \mathcal{A} +s\mathcal{X}\mathcal{Y}' )^{-1}		\nonumber \\
&= - \mathcal{A}^{-1} - s\mathcal{A}^{-1}\mathcal{X}(I- s \mathcal{Y}' \mathcal{A}^{-1}\mathcal{X})^{-1}\mathcal{Y}'\mathcal{A}^{-1}
%\label{eq:matinv1}
\end{align*}
Then, recalling \eqref{eq:GinvCalG} and \eqref{eq:mathcalG},
\begin{equation}
\label{eq:calGandGtilde}
G(s)^{-1} = \mathcal{G}(s) = -\mathcal{C}\mathcal{A}^{-1}\mathcal{B} + s \tilde{\mathcal G}(s)
\end{equation}
where
\begin{equation}
\label{eq:calG}
 \tilde{\mathcal G}(s) = C_1(sE_1 - I)^{-1}B_1
\end{equation}
and
\begin{align*}
%\label{eq:tilde1}
E_1 = \mathcal{Y}^\prime\mathcal{A}^{-1} \mathcal{X},\:
B_1 = \mathcal{Y}^\prime\mathcal{A}^{-1} \mathcal{B},\:	
  C_1 =\mathcal{C}\mathcal{A}^{-1} \mathcal{X} 
%\,, \quad
%\tilde{D} = -\mathcal{C}\mathcal{A}^{-1}\mathcal{B} \,.
\end{align*}
Letting
\begin{equation}
\label{eq:invcalA}
\left[\begin{array}{cc}
\tilde{A}	&\tilde{B}	\\
\tilde{C}	&\tilde{D}
\end{array}
\right]
=
\mathcal{A}^{-1}
\end{equation}
where $\tilde{A}$ has the same dimensions as $A$,
and recalling the expressions for  $\mathcal{X}, \mathcal{Y}$ and $ \mathcal{B}, \mathcal{C}$   in  (\ref{eq:XsYs}) and (\ref{eq:CalBcalC}) results in 
\begin{equation}
\label{eq:tilde3}
E_1=Y^\prime \tilde{A}X\,,
B_1 =	Y^\prime \tilde{B}\,,
C_1 = -\tilde{C}X\,, \mathcal{C}\mathcal{A}^{-1}\mathcal{B} = \tilde{D}
\end{equation}
 If the index  of $(\mathcal{E}, \mathcal{A})$ is $l+1$, Lemma \ref{lem:reduce} tells us that the index of  $(E_1, I)$ is $l$.
 If $l=0$, $E_1$ must be nonsingular and it follows from \eqref{eq:calGandGtilde} and \eqref{eq:calG} that $G^{-1}$ has no pole at infinity.
 Considering $l\ge1$,
it follows from \eqref{eq:calGandGtilde}  that  infinity is a pole of order $l$ of $G^{-1}$  if and only if infinity is a pole of order $l\!-\!1$ of $\tilde{\mathcal G}$.

Thus to complete the proof we need to show that the order of infinity as a pole of 
  $\tilde{\mathcal G}$ is     $l\!-\!1$.
%So suppose that the  index of $(E_1, I)$ is $l$.
 Since $\tilde{\mathcal G}(s) = C_1(sE_1 - I)^{-1}B_1$ and the  index of $(E_1, I)$ is $l$, it follows from Lemma \ref{lem:DesPhi} that 
\begin{eqnarray}
\tilde{\mathcal G}(s) &=&  s^{l-1}C_1\Phi_{l-1}B_1 +  s^{l-2}C_1\Phi_{l-2}B_1 \nonumber \\ & & +\cdots  +  C_1 \Phi_0B_1 + \tilde{T}(s)
\end{eqnarray}
where $\Phi_{l-1} \neq 0$ and $\tilde{T}$ is a rational function with  $\lim_{s \rightarrow \infty } \tilde{T}(s) =0$.
Also,
\begin{equation}
E_1\Phi_{l-1} = 0 \qquad \mbox{and} \qquad \Phi_{l-1} E_1  = 0
\end{equation}
Clearly, the order of  infinity as a pole of  $\tilde{\mathcal G}$ is $l\!-\!1$ if and only if $C_1\Phi_{l-1}B_1  \neq 0$.
Suppose, on the contrary that
$
C_1\Phi_{l-1}B_1  =0
$.

If $Z:= \Phi_{l-1}B_1 \neq 0$ then   
\begin{equation}
\label{eq:EZ}
E_1Z=0\,, \qquad C_1 Z =0\,, \qquad Z\neq 0 \,.
\end{equation}

 Since $X$ is full column rank and $E=XY'$, it follows from \eqref{eq:EZ} that 
 \[
 E \tilde{A}z=0\,, \qquad 
\tilde{C}z= 0\,, \qquad z =XZ\neq0 \,.
\]
Also,  \eqref{eq:invcalA}  implies that
\[
C\tilde{A}+D\tilde{C} = 0 
 \]
Hence $C\tilde{A}z = - D\tilde{C}z =0$.
Since $\tilde{B}z= 0$ we cannot have $\tilde{A} z=0$.
Thus,
\[
E(\tilde{A}z )=0\,, \qquad  C(\tilde{A}z) = 0\,, \qquad \tilde{A}z \neq 0 \,
\]
that is,  the matrix $\left[\begin{array}{c} E\\C\end{array}\right]$ does not have full rank in contradiction of 
rank condition  (\ref{eq:fullRankInf}).

Now suppose that $\Phi_{l-1}B_1 =0 $; then
we have
\[
\Phi_{l-1}E_1 =0\,, \qquad \Phi_{l-1}B_1 =0 \,, \qquad \Phi_{l-1} \neq 0 \,.
\]
 Since $Y$ is full column rank and $E=XY'$ we now  have,  upon  recalling \eqref{eq:tilde3}, that
\[
z' \tilde{A}E  =0\,, \qquad 
z' \tilde{B} =0 
\qquad \mbox{where} \quad 
z= Y\Phi_{l-1}' \neq 0
\,.
\]
It follows from \eqref{eq:invcalA} that
\[
\tilde{A}B +\tilde{B}D = 0\,.
 \]
Hence $z'\tilde{A}B = - z'\tilde{C}D  =0$.
Since $z'\tilde{B}= 0$,  we cannot have $z'\tilde{A} =0$.
Thus,
\[
(z'\tilde{A} )E=0\,, \qquad  (z'\tilde{A})B = 0\,, \qquad z'\tilde{A} \neq 0\,,
\]
that is, the matrix $\left[\begin{array}{cc} E& B\end{array}\right]$ does not have full rank in contradiction of 
rank condition  (\ref{eq:fullRankInf}).
It now follows that $C_1\Phi_{l-1}B_1 \neq 0$; hence the order  of infinity as a pole of $\tilde{\mathcal G}$ is $l-1$.
\end{proof}

%%%%%%%%%%%%%%%%%%%%%%%%
%					Ignore
%%%%%%%%%%%%%%%%%%%%%%%%
\ignore{
\paragraph{ original system.}
If the original system is normal ($E=I$) we will obtain that
$
G(s^{-1})^{-1} = \tilde{C}(s I - E_1)^{-1} \tilde{B} + \tilde{D}
$
where
\begin{equation}
%\fbox{$
\left[\begin{array}{cc}
E_1 	&\tilde{B}	\\
\tilde{C}	&\tilde{D}
\end{array}
\right]=
\left[\begin{array}{cc}
A	&B	\\
C	&D
\end{array}
\right]^{-1}
%$}
\end{equation}
}
%%%%%%%%%%%%%%%%%%%%%%%%
%					Ignore End
%%%%%%%%%%%%%%%%%%%%%%%%

%%%%%%%%%%%%%%%%%%%%%%%%%%%%%%%%%%%%%%%%%%%

%\subsection{Proof of main result}
%, Theorem \ref{th:main})
Theorem \ref{th:main} is simply a consequence of 
Theorem \ref{th:main2},
 Corollary
  \ref{cor:cor2}  and  Lemma   \ref{lem:InfintyPole}.
%%%%%%%%%%%%%%%%%%%%%%%%%%%%%%%%%%%%

\section{Proof of Theorem \ref{th:main3} and controller construction}

\subsection{Proof of Theorem \ref{th:main3}}
\begin{proof}
It follows from   Lemma \ref{lem:prelim02} that there exist matrices $L$ and $K$ 
with $I-KD$ non-singular 
such that $L(I-GK)^{-1}G$ is SPR if and only if 
$G^{-1}$ can be expressed as
\begin{equation}
\label{eq:GinvR2}
G(s)^{-1} = s H_1 + D_2 + R(s)
\end{equation}
where $H_1$  and $D_2$ are  constant matrices and 
either
\begin{itemize}
\item[(i)] $R=0$
or
\item[(ii)] $R(s) =  C_2(sI-A_2)^{-1}B_2$  where $A_2, B_2, C_2$ are constant matrices with  $A_2$  Hurwitz.
\end{itemize}
Using
  Lemma \ref{lem:prelim0},
  in either case,
     $L^\prime H_1$ is symmetric, positive semi-definite and there exists a matrix $M$ such that
\begin{equation}
\label{eq:L'GinvIneq2}
L'G( \jmath \omega)^{-1}+ G( \jmath \omega) ^{-\prime} L+M +M'  >0 
\end{equation}
for $ -\infty \le \omega \le \infty$.
Moreover,
\begin{equation}
\label{eq:K2}
K=-L^{-\prime}M 
\end{equation}
Since $L^\prime H_1$ is symmetric,   
it follows from 
\eqref{eq:GinvR2}  that
\eqref{eq:L'GinvIneq2}  is equivalent to
\begin{equation}
\label{eq:L'GinvIneq3}
L'R(\jmath \omega) + R(\jmath \omega)'L  + N+N^\prime >0 
\end{equation}
for $ -\infty \le \omega \le \infty$, where $N:= L'D_2  + M$.
Also, using \eqref{eq:K2},
\begin{equation}
\label{eq:Kform}
K =D_2 -L^{-\prime}N \,.
\end{equation}

If $R=0$, \eqref{eq:L'GinvIneq3} is equivalent to
\[
N+N' >0 \,.
\]

If $R(s) =  C_2(sI-A_2)^{-1}B_2$ ,
 \eqref{eq:L'GinvIneq3} is equivalent to
\begin{equation}
\label{eq:G2ineq}
G_2(\jmath \omega) + G_2(\jmath \omega)' >0 
\quad \mbox{for} \quad -\infty \le \omega \le \infty
\end{equation}
where
\begin{equation}
G_2(s) = L'C_2(sI-A_2)^{-1}B_2 +N
\end{equation}
Using the KYP Lemma and Lyapunov theory it follows that
satisfaction of \eqref{eq:G2ineq} is equivalent to the existence of 
a symmetric, positive definite matrix $P$ such that
 \begin{equation}
\label{eq:LMI}
\left[
\begin{array}{cc}
PA_2 +A_2'P  	& PB_2 - C_2' L\\
B_2'P  -L'C_2		&   - N  -N'
\end{array}
\right]
< 0
\end{equation}
Letting
\begin{equation}
Q:= -PA_2 - A_2'P
\end{equation}
and using a 
Schur complement result, inequality \eqref{eq:LMI} is equivalent to
\begin{align}
Q&>0\\
N+N' &>  (PB_2 - C_2' L)'Q^{-1}(PB_2 - C_2' L)
\end{align}
The proof is completed by invoking Lemma \ref{lem:prelim1}.
\end{proof}

\subsection{Controller construction}
\label{subsec:controller}
We provide here a method for obtaining the matrices $A_2, B_2, C_2, D_2$ and $H_1$ used in controller construction.
This method uses the state space description $(E,A,B,C,D)$ of the original system.

If $E=0$ then $G(s)^{-1}  =[D-CA^{-1}B]^{-1}$ and $H_1=0, R=0$ and $D_2=[D-CA^{-1}B]^{-1}$.

If $E\neq0$, let
$(X, Y)$ be any full rank decomposition of $E$, that is
   \begin{equation}
   E=XY^{\prime}\end{equation}
    where $X$ and $Y$ are full rank matrices \cite{SaiijaAl2013Descriptor}.
    One method of computing $X$ and $Y$ is to obtain a singular value decomposition
    of $E$, that is $
E= U\Sigma V'
$
where $U$ and $V$ are orthogonal matrices
and $\Sigma$ is diagonal with non-negative elements, $\sigma_1 \ge \sigma_2 \ge  \dots \ge \sigma_n$
called the singular values of $E$.
If $E$ is non-zero, let $\sigma_r$ be the smallest singular value of $E$. 
Then,
\begin{equation}
E=U_1\Sigma_1V_1^\prime
\end{equation}
where $U_1$ and $V_1$ consist of the first $r$ columns of $U$ and $V$ respectively and
$\Sigma_1$ is diagonal with  positive diagonal elements $\sigma_1,  \sigma_2, \dots, \sigma_r$.
    Now let 
  \begin{equation}
  X=U_1\Sigma_1\,, \qquad Y=V_1
  \end{equation}
    
    It follows from \eqref{eq:calGandGtilde}, \eqref{eq:calG} and   \eqref{eq:tilde3} that
\begin{equation}
G(s)^{-1} = \tilde{D} + sC_1(sE_1-I)^{-1}B_1
\end{equation}
where $B_1$, $C_1$  and $E_1$ are given in \eqref{eq:tilde3}.

\begin{itemize}
\item[(i)]
If $E_1=0$ then,
\begin{equation}
G(s)^{-1} = sH_1 +D_2
\end{equation}
where
\begin{equation}
D_2 = \tilde{D}\,,\quad H_1 = -C_1B_1 = \tilde{C}XY'\tilde{B} = \tilde{C}E\tilde{B}
\end{equation}
 
\item[(ii)]
If  $E_1\neq 0$,  we 
let
$(X_1, Y_1)$ be any full rank decomposition of $E_1$; thus
   \begin{equation}
 E_1=X_1Y_1^{\prime}
   \end{equation}
   % where $X_1$ and $Y_1$ are full rank matrices
   % and define
%\begin{align}
%E_2 = Y_1'X_1
%\end{align}
%
Since the index of $(\mathcal{E}, \mathcal{A})$ is at most two,  the matrix $Y_1'X_1$  is invertible.
% and 
%proceeding as in \eqref{eq:matinv1} 
%we obtain that 
%\[
%(sE_1-I)^{-f1} = 
%- I - s X_1(I- s Y_1' X_1)^{-1}Y_1'
%\label{eq:matinv2}
%\]
%
Proceeding, we obtain that
\begin{eqnarray}
\label{eq:des}
G(s)^{-1} &=& sH_1 + H_0(s)\, \\  \label{eq:des2} H_0(s)  &=&D_2 + C_2(sI-A_2)^{-1} B_2
\end{eqnarray}
where
\begin{equation}
H_1= \tilde{C}X(I- X_1A_2Y_1' )Y'\tilde{B}
\end{equation}
and
\begin{equation*}
\begin{array}{ll}
A_2=(Y_1'X_1)^{-1} \, &
B_2 = A_2\,Y_1' Y' \tilde{B}       \\
C_2= -\tilde{C} X\,X_1 A_2^2  \,,  &
D_2 = \tilde{D} +C_2A_2^{-1}B_2    
\end{array}
\end{equation*}
\end{itemize}

\subsubsection{Obtaining $L$}
Let $U\Sigma V'$ be a singular value decomposition of $H_1$.
Then
\begin{equation}
H_1 = U\Sigma V'
\end{equation}
where $U$ and $V$ are orthogonal matrices
and $\Sigma$ is diagonal with non-negative elements. 
With
\begin{equation}
 L=UV'
 \end{equation}
 we have
$
L'H_1 =VU'U\Sigma V' = V\Sigma V' \ge 0
$
and $L'H_1$ is symmetric.

\section{Conclusions}
Conditions are derived to determine
when and how  a system can be made SPR via output feedback. The first is a time domain spectral (eigenvalue) condition; the 
second an equivalent frequency domain condition. Finally,  a control design procedure to make a given system SPR is given. Simple examples are given to illustrate 
our results and to demonstrate their efficacy.

\section{Acknowledgements}
The authors are grateful for  useful discussions with Professor Edward J. Davison of the University of Toronto.

%\newpage
%%%%%%%%%%%%%%%%%%%%%%%%%%
%%%%%  Appendix	%%%%%%%%%%
%%%%%%%%%%%%%%%%%%%%%%%%%
\section{Appendix}

We now give some results that are important for our discussion.
Note that some of these have appeared in preliminary form 
in our previous papers \cite{SaijjaAl2013CommentsDescriptor, CorlessShortenSPR2010, CorlessShorten2011GenEV}

\subsection{Calculation 1}
First note that
$
y=Cx+DKy +Dw
$.
Assuming $I-DK$ is non-singular, 
$
y=(I-DK)^{-1}[Cx+Dw]
$
and
\begin{align*}
u&=K(I-DK)^{-1}[Cx +Dw] +w\\
&=(I-KD)^{-1}KCx +[I+(I-KD)^{-1}KD]w\\
&=(I-KD)^{-1}KCx +(I-KD)^{-1}w
\end{align*}
Hence, the system resulting from (\ref{eq:fbk}) applied to  (\ref{eq:ss1}) is given by
\begin{equation*}
\begin{array}{ll}
E\dot{x} = [A +B(I-KD)^{-1}KC]x +B(I-KD)^{-1}w\\
z= L(I-DK)^{-1}Cx +LD(I-KD)^{-1}w
\end{array}
\end{equation*}

%%%%%%%%%%%%%%%%%%%%%%%%%%%%%%%%%%
%			Order reducing proof 
%%%%%%%%%%%%%%%%%%%%%%%%%%%%%%

\subsection{Proof of Lemma \ref{lem:reduce}}
 \begin{proof}
First note that $(\mathcal{X}, \mathcal{Y})$ is a full rank decomposition of $\mathcal{E}$
%\begin{equation}
%\mathcal{E} = \mathcal{X} \mathcal{Y}^\prime
%\end{equation}
where
\begin{equation}
\label{eq:XsYs0}
\mathcal{X} = \left[\begin{array}{c}
X\\0
\end{array}\right]
\qquad
\mbox{and} \qquad
\mathcal{Y} = \left[\begin{array}{c}
Y\\0
\end{array}\right]
\end{equation}
Hence
\begin{equation}
\label{eq:calAcalE}
\mathcal{A}^{-1}\mathcal{E}= \mathcal{A}^{-1} \mathcal{X}\mathcal{Y}'
\end{equation}
Also
\begin{equation}
\label{eq:tildeA1}
E_1 = \mathcal{Y}'\mathcal{A}^{-1}\mathcal{X}
\end{equation}
  When $s$ is non-zero,
  \begin{eqnarray*}
  \det(sI -\mathcal{A}^{-1}\mathcal{E}) &=& \det(sI - \mathcal{A}^{-1}\mathcal{X}\mathcal{Y}')\\
  &=& s^n\det(I - s^{-1}\mathcal{A}^{-1}\mathcal{X}\mathcal{Y}')\\
 &=& s^n\det(I -s^{-1}\mathcal{Y}'\mathcal{A}^{-1}\mathcal{X})\\
  &=&  
  s^{n-m}\det(sI - \mathcal{Y}'\mathcal{A}^{-1}\mathcal{X})\\
 &=& s^{n-m}\det(sI -E_1)
  \end{eqnarray*}
  where $m$ is the rank of $\mathcal{X}$.
 This tells us that the non-zero eigenvalues of  $\mathcal{A}^{-1}\mathcal{E}$  and $E_1$ are the same.

Recalling \eqref{eq:calAcalE} and \eqref{eq:tildeA1}, we obtain that,  for $k=1,2, \ldots$,
  \[
  (\mathcal{A}^{-1}\mathcal{E} )^k =  \mathcal{A}^{-1}\mathcal{X}(\mathcal{Y}'\mathcal{A}^{-1}\mathcal{X})^{k-1} \mathcal{Y}' 
  = \mathcal{A}^{-1}\mathcal{X}E_1^{k-1} \mathcal{Y}'
  \]
  Since $\mathcal{A}^{-1}\mathcal{X}$ is full column rank  and $\mathcal{Y}'$ is full row rank,
 the rank of $  (\mathcal{A}^{-1}\mathcal{E} )^k $ equals the rank of $E_1^{k-1}$.
 Hence the index of zero as  an eigenvalue of $E_1$ equals $l-1$ where
  the index of zero as eigenvalue of $\mathcal{A}^{-1}\mathcal{E}$ is $l$.  
\end{proof}

%%%%%%%%%%%%%%%%%%
%%%%%%%	Proof of SPR Lemma	%%%H
%%%%%%%%%%%%%%%%%%
\subsection{Proof of Lemma \ref{lem:spr}}
\begin{proof}
%We first demonstrate that (\ref{eq:SPRprop1}) and (\ref{eq:SPRprop2}) hold.
(a) Since $H$ is a rational transfer function, we can express it as
\begin{equation}
H(s) = s^lH_l + \cdots + s H_1 + H_0(s)
\end{equation}
where $H_1, \ldots, H_l$ are constant  matrices and the rational function $H_0$ does not have a pole at infinity.
We need to show that $H_2, \ldots H_l =0$.
Since $H$ is SPR, we have
\begin{equation}
\label{eq:GSPR}
H(s) + H(s)^\prime > 0 \qquad \mbox{when} \quad \Re(s) \ge  0 
%-\epsilon
\end{equation}
and $s$ is finite and not a pole of $H$.
%for some $\epsilon >0$.
Consider any non-zero vector $u$. Then
%define the scalar transfer function $g$ by
%\begin{equation}
$h(s):=u^\prime H(s)u$
%\end{equation}
can be expressed as 
\begin{equation}
\label{eq:g}
h(s) =h_ls^l + \cdots + h_1s +h_0(s)
\end{equation}
where
\begin{equation}
h_k = u^\prime H_ku \quad \mbox{for} \quad  k=1, \cdots, l \quad \mbox{and}
\quad h_0=u^\prime H_0u\,.
\end{equation}
Also $h_0$ does not have a pole at infinity and 
it follows from (\ref{eq:GSPR}) that 
\begin{equation}
\label{eq:gSPR}
h(s) + h(s)' >0 \qquad \mbox{when} \quad \Re(s) \ge 0 
% -\epsilon.
\end{equation}
and $s$ is finite and not a pole of $H$.
%To show that $g_l =0$ if $l>1$, 

Consider any integer $m \ge 1$ for which $g_k =0$ for $k> m$  and suppose
$s=re^{\jmath \theta}$  where $-\pi/2 \le \theta \le \pi/2$ and $r>0$.
 Then, $r^{-m}  =e^{\jmath m \theta}s^{-m}$
and, recalling (\ref{eq:g}),
\[
\lim_{r \rightarrow \infty} r^{-m}h(re^{\jmath \theta})  
= e^{\jmath m \theta} \lim_{s \rightarrow \infty} s^{-m}h(s)  
= e^{\jmath m\theta} h_m\,.
\]
Since $\Re(r e^{\jmath \theta}) \ge 0$  and $r>0$, it follows from (\ref{eq:gSPR}) that
\[
r^{-m}h(re^{\jmath \theta})   +r^{-m}h(re^{\jmath \theta})^\prime >0\,;  
\]
when $r$ is sufficiently large;
hence, considering limits as $r \rightarrow \infty$,
\[
e^{\jmath m\theta} h_m+ e^{-\jmath m\theta}h_m'  \ge 0
\]
Let $ \alpha = \arg(h_m)$ with $0 \le \alpha < 2\pi$.
 Then $h_m = |h_m|e^{\jmath \alpha}$ and
\begin{align*}
&e^{\jmath m \theta} h_m+ e^{-\jmath m \theta}h_m'= e^{\jmath(\alpha + m\theta)} |h_m| + e^{-\jmath(\alpha + m \theta)} |h_m| \\
& = 2 \cos(\alpha +m \theta)|h_m| \; 
\end{align*}
hence,
\begin{equation}
\label{eq:cos}
\cos(\alpha +m \theta)|h_m| \ge 0 
\end{equation}
If $m >1$, we can choose $-\pi/2\le \theta \le \pi/2$
so that
%\begin{equation}
%\label{eq:theta}
%theta > (\pi/2 - \alpha)/l
%\end{equation}
%Hence
\begin{equation}
\label{eq:theta}
\pi/2 < \alpha + m \theta < 3\pi/2
\end{equation}
which results in $\cos(\alpha +m \theta) <0$.
It now follows from (\ref{eq:cos}) that we must have $h_m= 0$.
By induction, we obtain that
$
h_k = 0 \quad \mbox{for} \quad k >1
$.

When $m=1$ and $\alpha >0$, one can still choose $\theta$ to satisfy (\ref{eq:theta}). 
Hence $\alpha =0$ for $h_1 \neq 0$;
in this case 
$h_1$ is positive real.
Thus we must have
\[
h_1^\prime = h_1 \ge 0
\]

Since the above holds for any non-zero  complex vector $u$ we obtain the desired result that
\[
H_k = 0 \quad \mbox{for} \quad k> 1
\]
and 
\[
H_1^\prime = H_1\ge 0 \,.
\]

The demonstration that all finite poles of $H$ have negative real part proceeds  in a similar fashion;  see \cite{Guillemin1949}.
For any $\lambda$ with $\Re(\lambda) \ge 0$ the proof 
 proceeds by letting
 \[
 h(s) =h_l(s-\lambda)^{-l}+ \cdots + h_1(s-\lambda)^{-1}+h_0(s)
 \]
 where $h_0$ does not have a pole at $\lambda$ and 
 considering the behavior of \mbox{$r^mh(\lambda + re^{\jmath \theta})$}    as $r \rightarrow 0$.
 
 To show that $H^{-1}$ is SPR let $\epsilon_1 >0$ be such that 
 $H(s) +H(s)'>0$ when $s$ is not a pole of $H$ and $\Re(s) \ge -\epsilon_1$.
 Now choose $\epsilon_2 >0$ such that $\Re(s) <-\epsilon_2$ whenever $s$ is a pole of $H$.
 Letting $\epsilon=\min\{\epsilon_1, \epsilon_2\}$ we see that
 \[
 H(s) + H(s)' >0 \qquad \mbox{for} \quad \Re(s) \ge -\epsilon
 \]
 Thus whenever $\Re(s)\ge -\epsilon$, $H(s)$ is nonsingular and pre- and post-multiplying the above inequality by
 $H(s)^{-1}$ and $H(s)^{-'}$ yields
  \[
 H(s)^{-1} + H(s)^{-' }>0 \qquad \mbox{for} \quad \Re(s) \ge -\epsilon
 \]
 Hence $H^{-1}$ is SPR.
 \end{proof}

%%%%%%%%%%%%%%%%%%%%%%%%%%%%%%%%%%
%			Proof of descriptor lemma
%%%%%%%%%%%%%%%%%%%%%%%%%%%%%%
\subsection{Proof of Lemma \ref{lem:descriptor2}}

\begin{proof}
Suppose that $\lambda$ is an eigenvalue of $(E, A)$ but not a pole of the transfer function
\[
G(s) = C(sE-A)^{-1}B+D
\]
%Then
%\begin{equation}
%\det[\lambda E - A] = 0 \,.
%\end{equation}
%Also,
%\[
%\lim_{s \rightarrow \lambda} G(\lambda) = G_\lambda
%\]
and define
\begin{equation}
\label{eq:X(s)}
X(s) =(s E - A)^{-1}B
\,.
\end{equation}

First, we show that if 
$\lambda$ is  not a pole of $X$ then it is an uncontrollable eigenvalue of $(E, A, B)$.
When $\lambda$ is a not a pole of $X$,
\begin{equation}
\lim_{s \rightarrow \lambda}  X(s) = X_0
\end{equation}
for some limit $X_0$.
From (\ref{eq:X(s)}) we have
$
(sE-A)X(s) =B
$
and taking the limit of this expression as $s \rightarrow \lambda$ we see that
$
(\lambda E -A)X_0 = B
$.
This implies that
the matrices
$
 [ \lambda E -A \quad  B] $
 and 
$
\lambda E -A
$
have the same rank.
Since $\lambda$ is an eigenvalue
of $( E, A)$, the matrix 
$ \lambda E -A$ does not have full row rank.
Hence $ [ \lambda E- A \quad B]$ does not have max rank and 
$\lambda$  must be  an uncontrollable eigenvalue of
$(E, A, B)$.

Now we show that if   $\lambda$ is a pole of $X$ it must be  an unobservable eigenvalue of $(E, C, A)$.
When $\lambda$ is  not a pole of $X$,
\begin{equation}
\lim_{s \rightarrow \lambda} (s-\lambda)^p X(s) = X_p
\end{equation}
for some $p>0$ and $X_p \neq 0$.
From (\ref{eq:X(s)}) we have
\[
(sE-A)(s-\lambda)^pX(s) =(s-\lambda)^pB
\]
Taking the limit of this expression as $s \rightarrow \lambda$ results in
$
(\lambda E -A)X_p=0
$.
Since,
$
G(s) = CX(s) +D \,,
$
we also have
\[
(s-\lambda)^p G(s) = C (s-\lambda)^p X(s) + (s-\lambda)^p D
\]
Recalling that $\lambda$ is not a pole of $G$ and
taking the limit of the above  expression as $s \rightarrow \lambda$ yields
$
0 = CX_p
$.
Thus,
\[
\left[\begin{array}{c}
\lambda E - A\\
C
\end{array}
\right]X_p= 0
\,.
\]
Since $X_p \neq 0$ this implies that the matrix
\[
\left[\begin{array}{c}
\lambda E - A\\
C
\end{array}
\right]
\]
does not have max rank. Hence $\lambda$ is an unobservable eigenvalue of $(E, C, A)$.
\end{proof}

%%%%%%%%%%%%%%%%%%%
\subsection{Proof of Lemma \ref{lem:DesPhi}}
\begin{proof}
We first observe that  $\Phi$ defined by $\Phi(s) :=(sE-A)^{-1}$ is a rational function.
Since the index of $(E,A)$ is at least one, $E$ is non-singular and we claim that
$s\Phi(s)$ has a pole at infinity.
This follows from
\[
s(sE-A)^{-1} = (E-s^{-1}A)^{-1}
\]
Hence,
$\Phi(s)$
 can be expressed as
\begin{equation}
\label{eq:Phi0}
(sE-A)^{-1} = s^{l-1}\Phi_{l-1} +  s^{l-2}\Phi_{l-2} +\cdots +  \Phi_0 + T(s)
\end{equation}
for some integer $l\ge 1$ 
where 
$\Phi_{l-1} \neq 0$ 
and $T$ is a rational function with  $\lim_{s \rightarrow \infty } T(s) =0$.
Multiplying both sides of the above equation  on the  left by $sE-A$ yields
{
\begin{align*}
I&=s^lE\Phi_{l-1} +s^{l-1}(E\Phi_{l-2}-A\Phi_{l-1}) +  \cdots \\  
&+ s(E\Phi_0 - A \Phi_1) +sET(s) - A\Phi_0 -AT(s)
\end{align*}
}
for all complex numbers $s$.
Hence
\begin{eqnarray}
& &E\Phi_{l-1}  = 0		\label{eq:Phi1}						\\
& &E\Phi_{k}-A\Phi_{k+1} = 0,	\;  \;\; k=0, .., l-2			\label{eq:Phi2}\\
& &(sE-A)T(s)  -A\Phi_0 = I		\label{eq:Phi3}
\end{eqnarray}

It follows from \eqref{eq:Phi2} that
\begin{equation}
\label{eq:Phi4}
\Phi_{l-1} = (A^{-1}E)^{k}\Phi_{l-1-k}, \;\;  k=0, \ldots, l-1 
\end{equation}
Since $\Phi_{l-1} \neq 0$, it follows that $\Phi_{l-1-k}\neq 0$  for $k=0, \ldots, l-1$.
 Using \eqref{eq:Phi1} and \eqref{eq:Phi4},  we  deduce that
\[
(A^{-1}E)^{k+1}\Phi_{l-1-k}  = 0\,, \qquad (A^{-1}E)^{k}\Phi_{l-1-k} \neq 0\,,  
\]
for $k= 0, \ldots l-1$.
Hence, the rank of $(A^{-1}E)^{k+1}$ is less than the rank of $(A^{-1}E)^{k}$ for $ k= 0, \ldots,  l-1$.
It follows from \eqref{eq:Phi3} that
\begin{eqnarray}
E\Phi_{-1} -A\Phi_0 = I	\nonumber
\end{eqnarray}
where $\Phi_{-1} = \lim_{s \rightarrow \infty }s T(s)$.
Hence
$\Phi_0 =  A^{-1}E\Phi_0 - A^{-1}$ and
\[
0 = (A^{-1}E)^l\Phi_0 =  (A^{-1}E)^{l+1}\Phi_{-1} - (A^{-1}E)^lA^{-1}	
\]
Hence
$(A^{-1}E)^l  = (A^{-1}E)^{l+1}\Phi_{-1}A$.
This means that  $(A^{-1}E)^{l}$   and $(A^{-1}E)^{l+1}$ have the same rank.
Thus $l$ is the index of $(E,A)$. The second equality in \eqref{eq:EPhi} can be obtained 
multiplying both sides of \eqref{eq:Phi0}    on the  right by $sI-A$.
\end{proof}

\end{document}